\definecolor{dnrbl}{rgb}{0,0,0.5}
\definecolor{dnrgr}{rgb}{0,0.5,0}
\definecolor{dnrre}{rgb}{0.5,0,0}
\theoremstyle{plain}
\newtheorem{thm}{Theorem}[section]
\newtheorem{prop}[thm]{Proposition}
\newtheorem{lem}[thm]{Lemma}
\newtheorem{coro}[thm]{Corollary}
\numberwithin{equation}{section}
\newcommand{\Nat}{\mathbb{N}}
\newcommand{\restr}{\upharpoonright}  %restriction
\newcommand{\de}{\downarrow} %defined
\newcommand{\ml}{Martin-L\"{o}f }
\newcommand{\pz}{$\Pi^0_1$\ }
\begin{document}
\title[Kolmogorov complexity and computably enumerable sets]{Kolmogorov complexity and computably enumerable sets}

\author{George Barmpalias}
\address{{\bf George Barmpalias:} 
State Key Laboratory of Computer Science, 
Institute of Software,
Chinese Academy of Sciences,
Beijing 100190,
P.O. Box 8718,
China.}
\email{barmpalias@gmail.com}
\urladdr{\href{http://www.barmpalias.net}{http://www.barmpalias.net}}

\author{Angsheng Li}
\address{{\bf Angsheng Li:} 
State Key Laboratory of Computer Science, 
Institute of Software,
Chinese Academy of Sciences,
Beijing 100190,
P.O. Box 8718,
China.}
\email{angsheng@ios.ac.cn}

\date{06-02-2013}

\thanks{
This research was partially 
done during 
the programme `Semantics and Syntax' in the
Isaac Newton 
Institute for the Mathematical Sciences, 
Cambridge U.K.  
Barmpalias
 was supported by the
{\em Research fund for international young scientists}
number 611501-10168 from the National Natural Science Foundation of China, and
an {\em International Young Scientist Fellowship} 
number 2010-Y2GB03 from the Chinese Academy of 
Sciences.
 Li was supported by the {\em hundred talent program} of the Chinese
Academy of Sciences. Both authors received partial support by
the {\em Grand project: Network Algorithms and Digital Information} 
(number ISCAS2010-01) of the
Institute of Software, Chinese Academy of Sciences.}
%\subjclass[2010]{}
 \keywords{Computably enumerable sets, Kolmogorov complexity, relativization.}
%%%%%%%%%%%%%%%%%%%%%%%%%%%%%%

\begin{abstract} 
We study the computably enumerable sets 
in terms of the:\\
(a) Kolmogorov complexity of their initial segments;\\ 
(b) Kolmogorov complexity of finite programs
when they are used as oracles.\\
We present an extended discussion of the existing research on this topic,
along with  recent developments and open problems.
Besides this survey, our main original result is the 
following characterization of the computably enumerable sets
with trivial initial segment prefix-free complexity. A computably
enumerable set $A$ is $K$-trivial if and only if the family of sets with complexity
bounded by the complexity of $A$ is uniformly computable from the halting problem.
\end{abstract}
\maketitle
\section{Introduction}
The study of computably enumerable sets 
is a major part of classical computability theory.
The main focus in Kolmogorov complexity on the other hand
is arguably strings and sequences of high complexity, hence
not (segments of) the characteristic sequences of computably enumerable sets. 
Despite this, the study of the initial segment
 Kolmogorov complexity of computably enumerable (c.e.\ for short) sets
 dates back to the work of Barzdins \cite{BarzdinsCe} and hence is nearly 
 as old as the theory of Kolmogorov complexity itself.
Moreover, as we argue in the following, it is motivated by natural questions
about c.e.\ sets and has interesting nontrivial interactions with the traditional
study of c.e.\ sets from computability theory.

This paper is concerned with
the study of the computably enumerable sets in terms of: 
\begin{itemize}
\item[(a)] the complexity of their initial segments;
\item[(b)] the complexity of finite programs when they are used as oracles.
\end{itemize}
Kolmogorov complexity is a well known measure of complexity
of strings that is based on the intuitive idea
that complicated strings do not have short descriptions.
We will focus on two variants, the plain and the prefix-free complexity,
which are formally defined in Section \ref{subse:kolcomran}.
However our discussions, as well as some of the arguments we present
(in particular the arguments of Section \ref{se:splits}), are often applicable to
different variations like monotone or process complexity 
(see \cite[Section 3.15]{rodenisbook}
for an introduction). We will not be talking about the complexity of
left or right c.e.\ reals, namely the binary expansions of reals in $(0,1)$
whose left or right Dedekind cut is computably enumerable.
This is already a well developed area (see \cite[Chapter 5]{rodenisbook}
for an elaborate presentation) and is not directly relevant to our topic.
Moreover we do not discuss the interesting topic of 
the resource bounded versions of clauses (a), (b) 
(see \cite[Chapter 7]{MR1438307} for a general introduction on resource bounded
Kolmogorov complexity and 
\cite[Theorem 7.1.3]{MR1438307} which refers to the resource bounded 
initial segment complexity of a c.e.\ set).

We start with a brief overview of the basics of Kolmogorov complexity in 
Section \ref{subse:kolcomran}.
In Section \ref{se:mecomceset} we motivate the topic of this paper with various
intuitive questions and a survey of the relevant work in the literature.
More specifically, in Section \ref{se:iscces} we study the c.e.\ sets according to 
clause (a) and we discuss to what extend the motivating questions
have been addressed in the literature. Furthermore, we present the main original 
result in this paper (whose proof is given in 
Section \ref{se:profthunifiscom}), as an answer to one of these questions.
Section \ref{subse:relcompo}
consists of an analogous discussion of 
the study of c.e.\ sets according to clause (b).
Finally Section \ref{se:cocesplse} is devoted to the special topic of c.e.\ splittings
with respect to (a) and (b). Throughout the paper we point to several open questions
in the context of each discussion. A more comprehensive and general (not
restricted to the case of c.e.\ sets) survey of the topic
of Kolmogorov complexity and reducibilities can be found in \cite{BSLsurvK}.

\section{Background on Kolmogorov complexity and randomness}\label{subse:kolcomran}
A standard measure of the complexity of a finite string 
was introduced by
Kolmogorov in \cite{MR0184801}.
The basic idea behind this approach 
is that simple strings have short descriptions relative
to their length
while complex or random strings are hard to describe
concisely.
Kolmogorov formalized this idea using the theory of computation.
In this context, Turing machines play the role of our idealized computing devices, and
we assume that there are Turing machines capable of simulating any mechanical
process which proceeds in a precisely defined and algorithmic manner.
Programs can be identified with binary strings. 

\subsection{The definition of complexity and randomness}
A string $\tau$ is said to be a
description of a string $\sigma$ with respect to a Turing machine
$M$ if this machine halts when given program $\tau$ and
outputs $\sigma$. Then the Kolmogorov complexity of $\sigma$ with respect to $M$ 
(denoted by $C_M(\sigma)$) is the length
of its shortest description with respect to $M$. 
It can be shown that there exists an \emph{optimal} machine $V$, i.e.\
a machine which gives optimal complexity for all strings, up to a certain constant number of bits.
This means that for each Turing machine $M$ there exists a constant $c$
such that $C_V(\sigma)< C_M(\sigma)+c$ for all finite strings $\sigma$.
Hence the choice of the underlying optimal machine does
not change the complexity distribution significantly
and the theory of Kolmogorov complexity can be developed without loss of generality,
based on a fixed underlying optimal machine $U$. We let $C$ denote the
Kolmogorov complexity with respect to a fixed optimal machine.

When we come to consider randomness for infinite strings, it becomes important to
consider machines whose domain satisfies a certain condition;
the machine $M$ is called \emph{prefix-free} if
it has prefix-free domain (which means that no program for which 
the machine halts and gives output is an initial segment of another).
Similarly to the case of ordinary Turing machines, 
there exists an \emph{optimal} prefix-free machine $U$ so that 
for each prefix-free machine $M$ the complexity of any string with respect to $U$
is up to a  constant number of bits larger than the complexity of it with respect to $M$.
We let $K$ denote the prefix-free complexity with respect to a fixed optimal prefix-free
machine.

In order to define randomness for infinite sequences, we consider the complexity of all
finite initial segments. A finite string $\sigma$ is said to be  $c$-{\em incompressible} 
if $K(\sigma)\geq |\sigma|-c$, where $K=K_U$.
Levin \cite{MR0366096}
and Chaitin \cite{MR0411829}  defined an infinite binary sequence  $X$ to be random 
(also called 1-random)
if there exists some constant $c$ such that all of its initial segments are $c$-incompressible.
By identifying subsets of $\Nat$ with their characteristic sequence we can also talk about
randomness of sets of numbers.
Moreover the above definitions and facts relativize to an arbitrary oracle $X$ when
the machines that we use have access to this external source of information. For example,
in this case we write $K^X$ for the corresponding function of prefix-free complexity.

This definition of randomness of infinite sequences is independent of the choice of
underlying optimal prefix-free machine, and coincides with other definitions of randomness like
 the definition given by \ml in \cite{MR0223179}. 
The coincidence of the randomness notions resulting from various different 
approaches may be seen as evidence of a robust and natural theory.

\subsection{Sets of descriptions and construction of machines}
The {\em weight} of a prefix-free set $S$ of strings, denoted $\mathtt{wgt}(S)$, is defined
to be the sum $\sum_{\sigma\in S}  2^{-|\sigma|}$. The
{\em weight} of a (oracle) prefix-free machine $M^X$ is 
defined to be the weight of its domain and is denoted $\mathtt{wgt}(M^X)$.

Prefix-free machines are most often built in terms of {\em request sets}.
A request set $L$ is a set of pairs $\langle \rho, \ell\rangle$ where $\rho$ is a string
and $\ell$ is a positive integer. A `request' $\langle \rho, \ell\rangle$
represents the intention of describing $\rho$
with a string of length $\ell$. We
define the {\em weight of the request} $\langle \rho, \ell\rangle$ to be $2^{-\ell}$.
We say that $L$ is a {\em bounded request set}
if  the sum of the weights of the requests in $L$ is less than 1.
This sum is the {\em weight of the request set $L$} and is denoted by $\mathtt{wgt}(L)$.

The Kraft-Chaitin theorem
(see e.g.\ \cite[Section 2.6]{rodenisbook}) says that for every bounded request set $L$
which is c.e., there exists a prefix-free machine $M$ such that for each $\langle \rho, \ell\rangle\in L$
there exists a string $\tau$ of length $\ell$ such that $M(\tau)=\rho$.
The same holds when $L$ is c.e.\ relative to an oracle $X$, giving a machine $M^X$.
In Section  \ref{se:profthunifiscom} 
and the proof of Proposition \ref{prop:derakini} 
we freely use this method of construction without explicit reference to the
Kraft-Chaitin theorem.

\section{Measuring the complexity of a computably enumerable set}\label{se:mecomceset}
\subsection{Initial segment complexity of computably enumerable sets}\label{se:iscces}
Computable sets have trivial Kolmogorov complexity.
In order to produce the first $n$ bits of a computable set it suffices to
have a description of $n$, since all other information can be coded in a fixed program.\footnote{Note
that we may consider $C(n)$ either by identifying it with $C(0^{n})$ or
by assuming that the underlying optimal machine prints numbers, as well
as strings. Similarly for $K(n)$.}
Computably enumerable sets may not be computable, but it is not hard to see
that the information they may absorb
in their initial segments is quite limited.
\begin{equation}\label{eq:splbarzco}
\parbox{11cm}{{\small How complex can the segments of the characteristic sequence
of a c.e.\ set be?}}
\end{equation}
Barzdins \cite{BarzdinsCe} observed that $2\log n$ is an upper bound
(up to an additive constant) of the plain Kolmogorov complexity of the first $n$ bits of 
any computably enumerable set.
Moreover he constructed a c.e.\ set $A$ such that
$C(A\restr_n)\geq \log n$ for all $n$
while Chaitin \cite{Chaitin:76} (utilizing
a result of Meyer, see \cite[Exercise 2.3.4]{MR1438307})
showed that if $\forall n\ (C(X\restr_n)\leq \log n+c)$ for some constant $c$
then $X$ is computable. On the other hand Solovay
\cite{Solovay:75} observed that there is no c.e.\ set such that 
$2\log n$ is a lower bound of the initial segment complexity of it
(even up to an additive constant).
Note that $\log n$ is an upper bound of $C(n)$. 
H\"{o}lzl, Kr\"{a}ling and Merkle 
\cite{holzlMerkKra09} observed that, in fact, for every c.e.\ set $A$
there are infinitely many $n$ such that $C(A\restr_n)$ is bounded
by $C(n)$ (plus an additive constant); this also holds for prefix-free complexity.
Finally Kummer \cite{DBLP:journals/siamcomp/Kummer96}
showed that there are c.e.\ sets $A$ such that
$C(A\restr_n)\geq 2\log n-c$ for some constant $c$ and
infinitely many $n$. Moreover he showed that the Turing degrees that
contain such complex c.e.\ sets are exactly the array non-computable
c.e.\ degrees (a well studied class of degrees from computability theory).
For a more detailed overview of these results and their proofs we refer to
\cite[Section 16.1]{rodenisbook}.

%We are not aware of a thorough study of question (\ref{eq:splbarzco})
%in terms of the prefix-free complexity, but some facts may be obtained
%from the above results and the known relationships between $C$ and $K$
%(see \cite[Section 4.2]{rodenisbook} for a presentation of the equations
%of Solovay from \cite{Solovay:75}
%that relate $C$ and $K$). 
%Moreover the observation from 
%\cite{holzlMerkKra09} holds for $K$ invariably.

More recently, in \cite{cek2012}, it was shown that the c.e.\ sets
$X$ such that for all $n$, $C(X\restr_n)\geq \log n$  have the property that
their initial segment complexity dominates the
initial segment complexity of any other c.e.\ set (modulo an additive constant).
This result holds for both the plain and the prefix-free complexity.
Furthermore, for the case of the plain complexity, the two properties are equivalent.
Finally, a third clause in this equivalence is the linear-completeness of $X$, 
namely the property that every c.e.\ set is computable from $X$ via a use function that is bounded
by a linear function.

Our next question concerns the relation between
the overall information that is coded into a c.e.\ set and the
way that this information affects the Kolmogorov complexity
of its initial segments. Note that $C(n)$ and $K(n)$ are lower bounds for the
plain and prefix-free initial segment complexity of any sequence.
Hence we may say that the plain or prefix-free
initial segment complexity of a set is {\em trivial} if it is bounded by one of these
$C(n)$ or $K(n)$ respectively (up to an additive constant).
Such sets are also known as $C$-trivial or $K$-trivial respectively.
\begin{equation}\label{eq:spliti2nco}
\parbox{10cm}{How much information can be coded into a c.e.\ set with
trivial or `low' initial segment complexity?}
\end{equation}
As we already discussed, Chaitin \cite{Chaitin:76} showed that
sequences with trivial plain initial segment complexity are computable.
The case of prefix-free complexity turned out to be more interesting.
In \cite{MRtrivrealsH} it was shown that sequences with trivial prefix-free
initial segment complexity cannot compute the halting problem.
Hirschfeldt and Nies extended this result in \cite{MR2166184}
and showed that the amount of information that can be coded into $K$-trivial
sequences is in fact quite limited. 
On the other hand, a number of results say that there are Turing complete sets
of `very low' initial segment complexity. For example, given a 
nondecreasing unbounded $\Delta^0_2$ function $g$ there exists a 
complete c.e.\ set $A$ and a constant $c$ such that $K(A\restr_n)\leq K(n)+g(n)+c$ for all $n$.
This was demonstrated by Frank Stephan, see \cite[Section 5]{BV2010}.
In \cite{Baarsbarmp, BV2010} it was shown that this result is optimal, in the sense
that it is no longer true if one of the conditions on $g$ is removed. 
A stronger result was obtained in \cite{Baunilow}. It was shown that
there are complete c.e.\ sets of arbitrarily low complexity, with respect to the nontrivial complexities
of the c.e.\ sets. A more precise statement of this result is given in
Section \ref{subse:merelcocoen} where a formal way for comparing the initial segment complexities
of two sets is discussed.

Another topic of interest concerns the study of the ways in which sufficiently random oracles
are no better than computable oracles for performing certain computational tasks.
An early observation from \cite{deleeuw1955} is that if a set is c.e.\ relative to a 
sufficiently random oracle then it is c.e.\ without the use of an oracle.
Moreover it is well known (e.g.\ see \cite[Section 3]{Barmpalias.ea:08}) 
that if a sequence $X$ is random, then it 
is random relative to every sufficiently random sequence.
We give an example of such a result in the context of this paper.
Recall that $K(n)$ is the trivial complexity and a set is $K$-trivial if
its complexity is trivial (modulo an additive constant).
The following observation says that if the initial segment complexity of a c.e.\ set
relative to a sufficiently random oracle is trivial, then the set is already $K$-trivial.
The level of randomness that is required for this result is weak 2-randomness.
An oracle is weakly 2-random if it is not a member of any null $\Pi^0_2$ class.
\begin{prop}\label{prop:derakini}
Suppose that $X$ is weakly 2-random and $A$ is a c.e.\ set.
If $\exists c \forall n\ K^X(A\restr_n)\leq K(n)+c$ then $A$ is $K$-trivial.
\end{prop}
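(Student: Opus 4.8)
The plan is to bound $K(A\restr_n)$ in terms of $K(n)$ up to a constant by means of an \emph{averaged} a priori measure, and to use the randomness of $X$ twice: once to see that each relevant averaging set has positive measure, and once to see that these measures are bounded away from $0$. For an oracle $Y$ let $\mathbf{m}^Y$ denote the $Y$-relativized a priori probability, so $\mathbf{m}^Y(\sigma)\geq 2^{-K^Y(\sigma)}$ for all $\sigma$ and $\sum_\sigma\mathbf{m}^Y(\sigma)\leq 1$ for every $Y$ (the latter because the domain of the optimal prefix-free oracle machine $U$ is prefix-free for each oracle). Put $\overline{\mathbf{m}}(\sigma):=\int_{2^{\Nat}}\mathbf{m}^Y(\sigma)\,d\mu(Y)$, the integral being with respect to the uniform measure. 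Since $\mu\{Y\mid U^Y(p)\!\downarrow=\sigma\}$ is, uniformly in $p$ and $\sigma$, the measure of a \sz class, $\overline{\mathbf{m}}$ is lower semicomputable, and by Tonelli $\sum_\sigma\overline{\mathbf{m}}(\sigma)\leq\int 1\,d\mu=1$; thus $\overline{\mathbf{m}}$ is a lower semicomputable discrete semimeasure, and Kraft--Chaitin applied to a request set enumerated from its lower approximations yields a constant with $K(\sigma)\leq -\log\overline{\mathbf{m}}(\sigma)+O(1)$ for all $\sigma$. Let $c$ be the constant of the hypothesis and set $\mathcal{P}_n:=\{Y\mid K^Y(A\restr_n)\leq K(n)+c\}$, so that $X\in\bigcap_n\mathcal{P}_n$. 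Every $Y\in\mathcal{P}_n$ contributes at least $2^{-K(n)-c}$ to $\overline{\mathbf{m}}(A\restr_n)$, so $\overline{\mathbf{m}}(A\restr_n)\geq\mu(\mathcal{P}_n)\cdot 2^{-K(n)-c}$ and hence
\[
K(A\restr_n)\leq K(n)+c+\log\bigl(1/\mu(\mathcal{P}_n)\bigr)+O(1)\qquad\text{for all }n .
\]
It therefore suffices to prove that $\inf_n\mu(\mathcal{P}_n)>0$; $A$ is then $K$-trivial.

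This is where the hypothesis that $A$ is c.e.\ is used. Fix a computable enumeration $(A_s)$ of $A$ and a computable nonincreasing approximation $K_s$ of $K$. Unwinding the definition of $K^Y$ one obtains
\[
\mathcal{P}_n=\bigcup_s C_{n,s},\qquad
C_{n,s}:=\bigcap_{t\geq s}\Bigl\{Y\;\Big|\;\exists\,\tau\prec Y\ \exists\,p\ \bigl(|p|\leq K_t(n)+c\ \wedge\ U^{\tau}(p)[t]\!\downarrow=A_t\restr_n\bigr)\Bigr\},
\]
the sets $C_{n,s}$ being uniformly \pz and increasing in $s$, and the set in braces being clopen uniformly in $n,t$; the equality ``$Y\in\mathcal{P}_n$'' $\Leftrightarrow$ ``from some stage on $Y$ carries a correct description of $A\restr_n$ of length $\leq K(n)+c$'' uses only that $A_t\restr_n\to A\restr_n$, that $K_t(n)\to K(n)$, and that there are finitely many programs of bounded length. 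In particular, since $X\in\mathcal{P}_n$ and $X$ is \ml random, $X$ belongs to some $C_{n,s}$; and a \pz class containing a \ml random real is not null, so $\mu(\mathcal{P}_n)>0$ for every $n$.

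Finally, suppose towards a contradiction that $\inf_n\mu(\mathcal{P}_n)=0$. As every $\mu(\mathcal{P}_n)$ is positive we may pick a strictly increasing sequence $(n_k)$ with $\mu(\mathcal{P}_{n_k})<2^{-k}$; then $X\in\bigcap_k\mathcal{P}_{n_k}$, and as $\sum_k\mu(\mathcal{P}_{n_k})<\infty$, Borel--Cantelli gives $\mu(\limsup_k\mathcal{P}_{n_k})=0$, so $X$ lies in a null class. One now promotes this to a genuine generalized \ml test: using that the measure of a \pz class is computable from above --- so that ``$\mu(C_{n,s})<2^{-k}$'' is a \sz predicate and each such $C_{n,s}$ can be effectively enclosed in a clopen set of measure $<2^{-k}$ --- one assembles a uniformly \sz sequence of open sets $(G_k)$ with $X\in\bigcap_k G_k$ and $\mu(\bigcap_k G_k)=0$, which contradicts the weak $2$-randomness of $X$. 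The heart of the proof, and the step I expect to require the most care, is precisely this assembly: the closed stages $C_{n,s}$ must be enclosed in clopen supersets while keeping the accumulated excess measure summable, so that the $\limsup$ of the enclosures stays null; this is available because $A$ is c.e.\ (which pins the $\mathcal{P}_n$ at the level of countable unions of \pz classes, rather than being non-arithmetical in general), and it must fail for an arbitrary oracle in place of $A$, since the statement is false when $A=X$. With $\inf_n\mu(\mathcal{P}_n)>0$ in hand, the displayed inequality shows that $A$ is $K$-trivial.
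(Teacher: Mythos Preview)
Your averaging argument via $\overline{\mathbf{m}}$ is sound and gives the clean bound $K(A\restr_n)\leq K(n)+c+\log\bigl(1/\mu(\mathcal{P}_n)\bigr)+O(1)$, so everything reduces to $\inf_n\mu(\mathcal{P}_n)>0$. The gap is exactly where you locate it: the ``assembly'' of a generalized \ml test from a subsequence with $\mu(\mathcal{P}_{n_k})<2^{-k}$ is not carried out, and the obstacle is real. The choice of $n_k$ is not effective, and enclosing the stages $C_{n,s}$ in small clopen sets produces a c.e.\ union of small clopen sets, which need not have small measure or null intersection; I do not see how to control $\mu\bigl(\bigcap_k G_k\bigr)$ along these lines.

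The fix is much simpler than any assembly and is precisely the paper's opening observation: the full class $\mathcal{Q}=\bigcap_n\mathcal{P}_n$ is already $\Pi^0_2$. Because $A_t\restr_n$, $K_t(n)$ and $K_t^Y$ all converge and there are only finitely many programs of bounded length, one has
\[
K^Y(A\restr_n)\leq K(n)+c\ \Longleftrightarrow\ \exists^\infty t\ \bigl[K_t^Y(A_t\restr_n)\leq K_t(n)+c\bigr],
\]
so $\mathcal{Q}$ has the form $\{Y:\forall n\,\forall s\,\exists t>s\,[\text{clopen in }Y]\}$. Since $X\in\mathcal{Q}$ and $X$ is weakly 2-random, $\mu(\mathcal{Q})>0$; hence $\inf_n\mu(\mathcal{P}_n)\geq\mu(\mathcal{Q})>0$, and your displayed inequality finishes the proof immediately. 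From this same starting point the paper takes a different and longer route: it invokes Kolmogorov's 0--1 law to push the measure above $1/2$, then builds an explicit prefix-free machine and bounds its weight with a Chernoff estimate. Your integral approach, once patched as above, is more direct and avoids both the 0--1 law and the tail bound; it also makes transparent that only $A\in\Delta^0_2$ is needed, a remark the paper adds after its own proof.
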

\begin{proof}
Given a c.e.\ set $A$ and a constant $c$, the class of oracles $X$ such that
$\forall n\ K^X(A\restr_n)\leq K(n)+c$ is a $\Pi^0_2$ class.
Hence it suffices to show that if this class is not null then $A$ is $K$-trivial.
On this assumption, by Kolmogorov's 0-1 law 
there exists a constant $d$ such that the measure of the oracles
$X$ such that $\forall n\ K^X(A\restr_n)\leq K(n)+d$ is larger than 1/2.
Let $U$ be the underlying optimal oracle machine
such that $\mu(U^X)<1/2$ for all oracles $X$.
We also assume that any computations of $U$ at stage $s$ use less than $s$ bits of the oracle.
Without loss of generality we may assume that for all $X$, if there is a $U^X$ description of length $n$
that describes some
string $\tau$ then for each $i>n$ there exists a $U^X$ description of $\tau$ of length $i$.
Given a computable enumeration $A[s]$ of $A$ we construct a prefix-free machine $M$
as follows. At stage $s+1$ let $n$ be the least number $\leq s$ such that

\begin{itemize}
\item $K_M(A\restr_n)[s]> K(n)[s]+d$ 
\item $K^{X}(A\restr_n)[s]\leq K(n)[s]+d$ for a set of oracles $X$
of measure $>1/2$
\end{itemize}
 (if there is no such $n$, do nothing).
Then enumerate an $M$-description of $A\restr_n[s]$ of length 
$K(n)[s]+d$.

It remains to prove that the request set for $M$ is bounded.
Let $\sigma_i$ be the $i$th description enumerated in $M$.
It suffices to show that $\sum_{i\leq n} 2^{-|\sigma_i|}<1$ for all $n$.
For a contradiction, suppose that $\sum_{i\leq n_0} 2^{-|\sigma_i|}\geq 1$ for some $n_0$,
and let $s_0$ be the stage where $\sigma_{n_0}$ was enumerated into our machine $M$.
Each string $\sigma_i$, $i\leq n_0$ contributes at least $2^{-|\sigma_i|-1}$ to the expected weight
of the machine $U^{X\restr_{s_0}}$, where $X\restr_{s_0}$ is any oracle of length $s_0$.
Since $\mu(U^X)<1/2$ for all oracles $X$, this expected value is also less than 1/2.
This gives the required contradiction.
\end{proof}
\noindent
Note that Proposition \ref{prop:derakini} is no longer true if we replace `weakly 2-random'
with `1-random' since there are 1-random sequences which compute all c.e.\ sets. 
Moreover the same proof shows the result in the more general case when $A$ is $\Delta^0_2$.

We would like to make a note of another way to 
study the complexity of c.e.\ sets which was introduced by
Chaitin \cite{ChaitinEntropy} and will not be studied in this paper.
The (algorithmic) entropy of a c.e.\ set $A$ was defined as the probability that
a universal c.e.\ operator enumerates $A$.
Most of the research around this concept has to do with the relationship
to another measure of complexity, namely the length of
the shortest prefix-free description of a c.e.\ index of $A$.
Solovay \cite{Solovay:75, SolovProcEntro} obtained an upper bound of the latter in terms of
the entropy function and Vereshchagin 
\cite{ipl/Vereshchagin07} improved it for the special case of finite sets.
For a more elaborate overview of this research on the algorithmic entropy of c.e.\ 
sets we refer to \cite[Section 16.2]{rodenisbook}.

\subsection{Measures of relative complexity on computably enumerable sets}\label{subse:merelcocoen}
A fruitful way to study the complexity of a sequence is
to compare it with the complexities of other sequences.
Measures of relative complexity provide a formal way to do this.
In the case of initial segment complexity, a number of such measures were
introduced by Downey, Hirschfeld and LaForte in \cite{MR2030512}.
One such measure is the $\leq_K$ reducibility defined as
\[
X\leq_K Y\iff \exists c\forall n \ (K(X\restr_n)\leq K(Y\restr_n)+c)
\]
as well as its plain complexity version $\leq_C$ which is defined similarly.
We may express the fact that $X\leq_K Y$ simply by saying that
the initial segment complexity of $X$ is less than (or equal to) the complexity of $Y$.
The result from \cite{Baunilow} that we discussed in relation to
question \eqref{eq:spliti2nco} may be formally stated as follows.
Given any c.e.\ set $B$ which is not $K$-trivial, there exists a Turing
complete c.e.\ set $A$ such that $A<_K B$.

A central topic in the study of c.e.\ sets in computability theory are the
`c.e.\ splittings', see \cite{Downey.Stob:93}. 
We say that a pair $B,C$ of c.e.\ sets is a c.e.\ splitting of $A$ if
$B\cap C=\emptyset$ and $B\cup C=A$.
One of the simplest questions that we can ask about the initial segment complexity
of c.e.\ splittings is the following. 
\begin{equation}\label{eq:splitinco}
\parbox{10cm}{Given a c.e.\ set can we split it into two c.e.\ sets of strictly less
initial segment complexity?}
\end{equation}
A positive answer was given in \cite[Section 5]{Barstris}
for both the plain and the prefix-free complexity, provided that the given set has non-trivial complexity.
In Section \ref{se:cocesplse} we give an extension of this splitting theorem, showing that the splitting may avoid
bounding the complexity of any given nontrivial $\Delta^0_2$ set.
Note that such results can be viewed as analogues of the classic Sacks splitting theorem that was proved
in \cite{Sacks:63*3} for the Turing degrees.
\begin{equation}\label{eq:spli6tinco}
\parbox{10cm}{Given a c.e.\ set can we split it into two c.e.\ sets of the same
initial segment complexity?}
\end{equation}
This question is formally addressed in Section \ref{se:cocesplse}.
In \cite{cek2012} it was given a positive answer for prefix-free complexity and a negative
answer for plain complexity.
We note that the sets that satisfy the analogue of 
(\ref{eq:spli6tinco}) in the Turing degrees are called mitotic and have been
studied extensively in computability theory, see \cite{Ladner:73, Ladner:73*1, Downey.Slaman:89}.
The existence of non-mitotic sets was shown by Lachlan in \cite{Lachlan:67}.

\begin{equation}\label{eq:spliti4nco}
\parbox{10cm}{Is there a c.e.\ set whose 
initial segment complexity is maximal amongst the c.e.\ sets?}
\end{equation}
This question has been answered negatively 
in \cite{DBLP:conf/cie/Barmpalias05} with respect to a stronger measure
$\leq_{\textrm{cl}}$,  where $X\leq_{\textrm{cl}} Y$ if $X\restr_n$
can be computed from $Y\restr_{n+c}$ for some constant $c$ and all $n$.
In \cite{Merklebarmp} an easier proof of this result was given.
By \cite{MR2030512} the partial order 
$\leq_{\textrm{cl}}$ coincides with the Solovay reducibility on the c.e.\ sets,
which is a standard measure of relative randomness for the larger class of c.e.\ reals.
In particular, it does express in some (crude) sense the relative complexity of sequences.
Other aspects of $\leq_{\textrm{cl}}$ on the c.e.\ sets were studied
in \cite{BLibT, apal/Day10}. For the case of 
$\leq_K$ and $\leq_C$ a positive answer was given recently in
\cite{cek2012}, as we discussed in the previous section.
\begin{equation}\label{eq:spliti3nco}
\parbox{10cm}{How `large' is the class of sets with initial
segment complexity bounded by the complexity of a c.e.\ set?}
\end{equation}
There are many ways to measure the largeness of a class of reals,
including determining the cardinality of the class. In the case of $\leq_C, \leq_K$
it turns out that the lower cones below a c.e.\ sets are always subsets of
$\Delta^0_2$, hence countable (e.g.\ see \cite[Section 2]{BV2010}).
It is interesting to examine if these are uniform subclasses of $\Delta^0_2$,
in the sense that they can be indexed by a single $\emptyset'$-computable
predicate. In Section \ref{se:profthunifiscom} we prove the following characterization.
\begin{thm}\label{th:unifiscom}
The following are equivalent for a computably enumerable set $A$.
\begin{itemize}
\item[(a)] $A$ is $K$-trivial;
\item[(b)] every set $X\leq_K A$ is truth-table reducible to $\emptyset'$;
\item[(c)] $\{X\ |\ X\leq_K A\}$ is uniformly computable in $\emptyset'$;
\end{itemize}
%Moreover the above clauses are equivalent to the condition that
%$\{X\ |\ X\leq_K A\}$ is uniformly computable in $\emptyset'$.
\end{thm}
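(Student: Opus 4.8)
The plan is to prove (a) $\Rightarrow$ (b) $\Rightarrow$ (a), and then observe that the argument for (a) $\Rightarrow$ (b) in fact produces the uniformity. For the easy direction (b) $\Rightarrow$ (a), I would argue by contraposition: if $A$ is not $K$-trivial, I want to exhibit some $X\leq_K A$ which is not truth-table reducible to $\emptyset'$. The natural candidate comes from the splitting-type results mentioned in the excerpt (question \eqref{eq:splitinco} and \cite{Baunilow}): since $A$ is not $K$-trivial, one can build a c.e.\ set $B$ with $B\le_K A$ that carries ``too much'' information to be $\mathrm{tt}$-below $\emptyset'$ — for instance one can make $B$ of high enough c.e.\ degree, or directly diagonalize against all $\mathrm{tt}$-functionals with $\emptyset'$ as oracle while keeping $K(B\restr_n)\le K(A\restr_n)+O(1)$ by a Kraft–Chaitin bookkeeping (every time we put an element into $B$ we pay for a new description of $B\restr_n$, and the non-$K$-triviality of $A$ supplies enough ``room'' in $\sum 2^{-K(A\restr_n)}$ to afford this). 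So any $X$ that is not $\Delta^0_2$-uniform, or simply not $\mathrm{tt}$-below $\emptyset'$, sitting in the lower $\le_K$-cone of $A$ refutes (b).

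For the main direction (a) $\Rightarrow$ (b), assume $A$ is $K$-trivial and fix $X$ with $K(X\restr_n)\le K(A\restr_n)+c\le K(n)+c'$ for all $n$. We already know (as noted in the excerpt, via \cite{BV2010}) that $X\in\Delta^0_2$; the task is to get a \emph{truth-table} reduction, equivalently a $\emptyset'$-computable bound on the use / a total $\emptyset'$-computable procedure. The key point is that $K(n)$-triviality of $X$ means that, for each $n$, the string $X\restr_n$ is one of only $O(1)$ many strings of length $n$ with $K$-complexity at most $K(n)+c'$; moreover, this small set of candidates is uniformly c.e.\ (enumerate strings $\sigma$ of length $n$ as short descriptions for them appear). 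The classical ``$K$-trivial sequences form a $\Delta^0_2$ set computed with bounded mind-changes'' argument then shows that from $\emptyset'$ we can, uniformly in $n$, compute $X\restr_n$: ask $\emptyset'$ which of the (at most $2^{c'}$) candidate strings of length $n$ is the correct initial segment, using that the candidates stabilize and that a wrong guess at stage $n$ can be corrected by asking $\emptyset'$ about the finitely many later stages. Since the number of candidates is bounded by an absolute constant $2^{c'}$, the number of queries needed is bounded by a computable function of $n$, which is exactly a truth-table reduction. Running this construction uniformly in a c.e.\ index for $A$ and in the constant $c$ gives a single $\emptyset'$-computable predicate that enumerates $\{X\mid X\le_K A\}$: for each $c$ it outputs the (finitely many, by the counting bound applied to all length-$n$ strings) sequences whose initial-segment complexity is within $c$ of that of $A$, using $\emptyset'$ to decide which candidates survive. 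This yields the final ``Moreover'' clause.

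The main obstacle I expect is turning the $\Delta^0_2$ approximation of $X$ into a genuine \emph{truth-table} (i.e.\ total, computably-bounded-use) reduction, rather than merely a Turing reduction to $\emptyset'$. The counting bound ``there are only $O(1)$ strings of length $n$ below complexity $K(n)+c'$'' is what saves the day — it caps the number of $\emptyset'$-queries per output bit — but one has to be careful that the relevant constant depends only on $c$ (hence on $A$ and the $\le_K$-witness), not on $X$, so that a \emph{single} $\emptyset'$-computable predicate handles the whole cone once we range over all $c\in\mathbb{N}$. A secondary subtlety is ensuring, in the (b) $\Rightarrow$ (a) direction, that the witness $X$ we build in the lower $\le_K$-cone of a non-$K$-trivial $A$ genuinely fails to be $\mathrm{tt}$-below $\emptyset'$ while still being $K$-dominated by $A$; the cost-function / Kraft–Chaitin balance here is the delicate part, and is where the hypothesis that $A$ is \emph{not} $K$-trivial is used essentially.
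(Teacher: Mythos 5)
Your plan for the implication \textup{(b)}~$\Rightarrow$~\textup{(a)} has a fatal flaw. You propose to exhibit, given a non-$K$-trivial c.e.\ set $A$, a \emph{computably enumerable} set $B\leq_K A$ that is not truth-table reducible to $\emptyset'$. But this is impossible: every c.e.\ set is $\omega$-c.e.\ (the approximation $B[s](n)$ changes at most once per $n$), and a set is $\omega$-c.e.\ if and only if it is truth-table reducible to $\emptyset'$. The paper is forced, for exactly this reason, to construct a genuinely $\Delta^0_2$ set $B$: it fixes an arbitrary uniformly $\emptyset'$-computable family $(X_i)$, reserves a movable witness $n_i$ for each $i$, and repeatedly flips $B(n_i)[s]$ to equal $1-X_i(n_i)[s]$, so that $B(n_i)$ may change direction as often as the (uncontrolled) approximation to $X_i(n_i)$ does. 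Keeping $B\leq_K A$ under these repeated injuries is the whole content of the proof: each flip of $B(n_i)$ forces the machine $M$ to re-describe $B\restr_z$ for $z>n_i$, and the resulting weight is bounded by a decanter-style argument with auxiliary machines $N_i$, thresholds $q_i, p_i$, and an inductive analysis of the nested sets $S_k$ of reused $U$-descriptions. Your ``Kraft--Chaitin bookkeeping\,\ldots\,the non-$K$-triviality of $A$ supplies enough room'' gestures at the right obstruction, but the bookkeeping is precisely where the argument lives, and it cannot be done for a c.e.\ $B$ since no c.e.\ $B$ can witness the failure of \textup{(b)}.

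Your sketch of \textup{(a)}~$\Rightarrow$~\textup{(b)} is also too optimistic. The counting bound (at most $O(2^c)$ strings of length $n$ with $K(\sigma)\leq K(n)+c$) gives, with oracle $\emptyset'$, a $\Pi^0_1(\emptyset')$ tree of bounded width containing $X$, and hence shows $X\in\Delta^0_2$; this is Chaitin's observation and the paper notes it explicitly. But bounded width does not hand you a truth-table reduction: you do not know in advance which of the candidates at level $n$ will survive, nor how many mind-changes the $\emptyset'$-approximation will undergo before the spurious branches die, so the use of $\emptyset'$ is not computably bounded by this reasoning alone. The paper treats \textup{(a)}~$\Rightarrow$~\textup{(b)} as a citation to Nies \cite{MR2166184} and emphasizes that the uniform $\emptyset'$-computability of the $K$-trivials is ``highly nontrivial, involving the full power of what is now known as the decanter method''; a naive counting argument does not reproduce it. Finally, note that the paper actually proves the cycle \textup{(c)}~$\Rightarrow$~\textup{(a)}~$\Rightarrow$~\textup{(b)}~$\Rightarrow$~\textup{(c)}, where \textup{(c)} is the uniformity clause; the direction \textup{(b)}~$\Rightarrow$~\textup{(c)} is immediate, and the new construction proves $\neg$\textup{(a)}~$\Rightarrow$~$\neg$\textup{(c)} by diagonalizing against an arbitrary uniform $\emptyset'$-listing rather than against tt-functionals directly.
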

This result provides an answer to question (\ref{eq:spliti3nco}) since
a uniform subclass of $\Delta^0_2$ may be considered `effectively small' while
classes that do not admit a uniform parameterization are, in a sense, `effectively large'.
A more precise treatment of this notion of `largeness' may be obtained via the use of
resource bounded measure, an approach that was developed by Jack Lutz in various
contexts and is based on the use of effective martingales.
In our case we are interested in the size of a set of reals as a subclass of 
$\Delta^0_2$. An example of such a study
is \cite{HirschTerwLcomp} where it is shown that the $\Delta^0_2$ measure of
the class of oracles that are computable by an incomplete $\Delta^0_2$ set is 0.
We may ask the same question with respect to $\leq_K$, $\leq_C$ as a formal version of
question (\ref{eq:spliti3nco}). In other words, to determine the 
$\Delta^0_2$ measure of
the class of oracles with initial segment complexity bounded by the complexity of
a given c.e.\ set.

Since Section \ref{se:profthunifiscom} is entirely devoted to the proof of Theorem
\ref{th:unifiscom}, we wish to say a few more words on its relevance with
the work of other authors. Both directions of the equivalence that it asserts are
nontrivial.
Chaitin \cite{Chaitin77ibm} observed that (by a relativization of an argument from
\cite{Loveland69}) all $K$-trivial sequences are
computable from the halting problem; equivalently, they have a computable approximation.
 However this proof is not uniform, hence it does not show that
 the $K$-trivial sequences can be listed by a single machine operating
 with oracle $\emptyset'$. 
 One of the consequences of Nies \cite{MR2166184} was that the family of
 $K$-trivial sequences is indeed uniformly $\emptyset'$-computable.
 This is the only known proof of this fact and is highly nontrivial, involving
 the full power of what is now known as the {\em decanter method}.
 
We already pointed out that given any c.e.\ set $A$,
the class of sequences with initial segment prefix-free complexity that is bounded 
by the complexity of $A$ is contained in $\Delta^0_2$.
This is merely an extension of the argument for the case where $A=\emptyset$, so
again it is nonuniform. 
We wanted to know if this class can be uniformly $\emptyset'$-computable in
any cases other than the known case where $A$ is $K$-trivial.
A positive answer would have interesting consequences on the local structures
of the $K$ degrees. For example, combined with the results in \cite{BV2010}
it would establish the existence of a pair of $\Delta^0_2$ sets that form a minimal pair
in the $K$ degrees.
However, Theorem
\ref{th:unifiscom} shows that this uniformity is a special feature that characterizes
the $K$-trivial computably
enumerable sets. 
 
\begin{coro}\label{th:unigencfiscom}
The following are equivalent for any finite collection of computably enumerable sets $A_i$, $i<k$.
\begin{itemize}
\item[(a)] There exists $i<k$ such that $A_i$ is $K$-trivial;
\item[(b)] The sets $X$ such that $X\leq_K A_i$ for all $i<k$ are truth-table reducible to $\emptyset'$.
\end{itemize}
Moreover \textup{(a)}, \textup{(b)} are equivalent to the condition that 
$\{X\ |\ \forall i<k\ (X\leq_K A_i)\}$ is uniformly computable in $\emptyset'$.
\end{coro}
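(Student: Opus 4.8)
The plan is to reduce the corollary to Theorem~\ref{th:unifiscom} in the direction where some $A_i$ is $K$-trivial, and to re-run the construction behind the nontrivial direction of that theorem --- now carrying $k$ parameters --- for the converse.

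Suppose first that (a) holds, say $A_0$ is $K$-trivial. The key observation is that then
\[
\{X\mid \forall i<k\ (X\leq_K A_i)\}=\{X\mid X\leq_K A_0\},
\]
which is just the class of $K$-trivial sets. The left-to-right inclusion is immediate (take $i=0$). Conversely, if $X\leq_K A_0$ then, since $A_0$ being $K$-trivial means $A_0\leq_K \emptyset$, transitivity of $\leq_K$ gives $X\leq_K\emptyset$, i.e.\ $X$ is $K$-trivial; and any $K$-trivial $X$ satisfies $K(X\restr_n)\leq K(n)+O(1)\leq K(A_i\restr_n)+O(1)$ for every $i<k$ (using that $K(n)\leq K(\sigma)+O(1)$ for every string $\sigma$ of length $n$), so $X\leq_K A_i$ for all $i$. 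Applying Theorem~\ref{th:unifiscom} to the computably enumerable set $A_0$, which satisfies clause (a) of that theorem, we get at once that every member of this class is truth-table reducible to $\emptyset'$ and that the class is uniformly computable in $\emptyset'$. This gives (a)$\Rightarrow$(b) and (a)$\Rightarrow$(uniform).

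For the converse it suffices to prove that if \emph{no} $A_i$ is $K$-trivial then the class $\mathcal{C}=\{X\mid\forall i<k\ (X\leq_K A_i)\}$ contains a set that is not truth-table reducible to $\emptyset'$, and is not uniformly computable in $\emptyset'$. I would obtain this by the same strategy as the hard direction of Theorem~\ref{th:unifiscom}, applied simultaneously to the finitely many non-$K$-trivial c.e.\ sets $A_i$: build a $\Delta^0_2$ set $X$ by a priority construction with diagonalization requirements against all truth-table functionals (asking $X\neq\Gamma^{\emptyset'}$) and, to defeat the uniform condition, against all candidate $\emptyset'$-computable parameterizations of $\mathcal{C}$; and for each $i<k$ maintain a prefix-free machine $N_i$, presented as a bounded request set, which issues a request for the current value of $X\restr_n$ of length $\ell+O(1)$ whenever the optimal machine is seen to output $A_i\restr_n$ from a program of length $\ell$. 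These machines guarantee $K(X\restr_n)\leq K(A_i\restr_n)+O(1)$, i.e.\ $X\leq_K A_i$, for every $i$. When a requirement wishes to change $X$ at position $n$ it must repay for the new values of $X\restr_m$, $m>n$, in each $N_i$; the construction permits such a change only when it is affordable in all $k$ machines at once, and the non-$K$-triviality of each $A_i$ separately is what guarantees that these affordable stages recur often enough for every requirement to be satisfied --- the same combinatorics used for $k=1$ in the proof of Theorem~\ref{th:unifiscom}.

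The main obstacle is this last point: reconciling the $k$ simultaneous complexity constraints $X\leq_K A_i$ with unbounded diagonalization. One must calibrate the thresholds that license a requirement to inject a bit into $X$ so that (i) the total weight charged to each $N_i$ stays below $1$, so that each $N_i$ is a genuine bounded request set, and (ii) every requirement is eventually licensed to act; and (ii) is exactly where the hypothesis that \emph{each} $A_i$ is non-$K$-trivial is genuinely used (if even one $A_i$ were $K$-trivial there would be no room, consistently with the forward direction). Granting this calibration, the remaining verifications --- that $X$ is $\Delta^0_2$, that $X\leq_K A_i$ for all $i$, and that $X$ defeats all truth-table reductions from $\emptyset'$ and all uniform $\emptyset'$-enumerations of $\mathcal{C}$ --- are routine and mirror the single-set case.
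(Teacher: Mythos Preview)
Your forward direction, (a)$\Rightarrow$(b) and (a)$\Rightarrow$(uniform), matches the paper's reasoning. For the converse the paper takes a different and much shorter route: rather than re-running the construction of Theorem~\ref{th:unifiscom} with $k$ simultaneous complexity constraints, it invokes a result from \cite{Baunilow} stating that for any two c.e.\ sets $B,C$ with $\emptyset<_K B$ and $\emptyset<_K C$ there is a c.e.\ set $A$ with $A\leq_K B$, $A\leq_K C$ and $\emptyset<_K A$. Iterating this over the $A_i$ (none of which is $K$-trivial, by hypothesis) yields a single non-$K$-trivial c.e.\ set $A$ with $A\leq_K A_i$ for all $i<k$. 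Theorem~\ref{th:unifiscom} applied to this $A$ then gives, for any uniformly $\emptyset'$-computable family, a set $X\leq_K A$ outside that family; since $A\leq_K A_i$ this $X$ lies in $\mathcal{C}=\{X\mid\forall i<k\ X\leq_K A_i\}$, so $\mathcal{C}$ is not uniformly $\emptyset'$-computable, and taking the family of all tt-reductions from $\emptyset'$ shows (b) fails as well. The ``calibration'' problem you identify as the main obstacle simply disappears.

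Your direct approach is plausible and should go through, but note that your sketch mentions only the main machines witnessing $X\leq_K A_i$; the heart of the combinatorics in the proof of Theorem~\ref{th:unifiscom} lies in the \emph{auxiliary} machines $N_j$ (one per diagonalization witness, so here one per witness and per $i<k$) that attempt and fail to certify $K$-triviality, and it is these that actually license each diagonalization step. With that extra layer the decanter accounting of Section~\ref{se:profthunifiscom} should carry an additional index without real difficulty, but the paper's black-box reduction via \cite{Baunilow} avoids rewriting the argument altogether.
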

\noindent
This generalization of  Theorem \ref{th:unifiscom} may be obtained by an
application of a result in \cite{Baunilow}. Namely, 
it was shown that given two c.e.\ sets
 $B, C$ such that $\emptyset <_K B$ and
 $\emptyset <_K C$ there exists a c.e. set
 $A$ such that $A\leq_K B$, $A\leq_K C$ and $\emptyset <_K A$.\footnote{At this
 point we would like to draw a parallel between the study of
 the $K$ degrees of c.e.\ sets and the $K$ degrees of \ml random sets that
 was the object of study in \cite{milleryutran, milleryutran2}.
 One of the main open questions in this study was whether there is a maximal
 element in the $K$ degrees of random reals, which is an analogue of
question (\ref{eq:spliti4nco}). Moreover it was shown that is a pair
of random reals  $X,Y$ which has no upper bound with respect to $\leq_K$.
Finally it was shown that given any finite collection $X_i$, $i<k$ 
of random reals, there exists
a random real $Y$ such that $Y<_K X_i$ for all $i<k$. The analogue of this
result for the c.e.\ sets was proved in \cite{Baunilow}.
This analogy stems from the analogy between the oscillation of
the prefix-free complexity of a random sequence between $n$ and $n+K(n)$
and the oscillation of the complexity of a c.e.\ set between $K(n)$ and $4\log n$.}

\begin{equation}\label{eq:algindceo}
\parbox{11cm}{What is `algorithmical independence'
for computably enumerable sets?}
%What does it mean for two computably enumerable sets
%to be `algorithmically independent'?
\end{equation}
The notion of algorithmic independence of random sequences is well understood
through the concept of relative randomness. In particular, two random sequences
may be regarded algorithmically independent if each of them is random relative to
the other. Various authors have attempted to extend the notion of
algorithmic independence to a wider class of sequences.
Such formalizations have been suggested by Levin 
\cite{Levin84,Levinsdf84, DBLP:conf/focs/Levin02}
through the concept of `mutual information' of sequences, and by 
Calude and Zimand  \cite{CaludeZ10}. Although the exact relationship
between these formalizations is not known, some of them are clearly too crude
for the purpose of expressing independence for pairs of computably enumerable sets.
For example the definitions in \cite{CaludeZ10} are not sensitive to 
additive logarithmic factors. In particular, since the complexity of a c.e.\ set
is at most $4\log n$, every pair of c.e.\ sets is independent according to
\cite{CaludeZ10}.
It would be interesting to address
question (\ref{eq:algindceo}) by crafting an appropriate formalization which
expresses the informal concept of independence for c.e.\ sets.
We note that the concept of minimal pairs with respect to $\leq_K$, $\leq_C$
does express some notion of independence. The existence of minimal pairs
of c.e.\ sets with respect to $\leq_C$ was shown in
\cite{MRmerstcdhdtd} and the nonexistence 
with respect to $\leq_K$ was shown in \cite{Baunilow}.

\section{Relative compression power of computably enumerable oracles}\label{subse:relcompo}
A second way to study the c.e.\ sets with respect to Kolmogorov complexity is
to examine their power when they take the place of an oracle in the 
underlying optimal universal machine. Recall that $K^X$ denotes the prefix-free complexity
relative to oracle $X$. Hirschfeldt and Nies showed in \cite{MR2166184}
that $K^X$ does not differ from $K$ more than a constant if and only if
$X\equiv_K \emptyset$. In other words, the oracles that do not improve the 
compression
of finite programs significantly are exactly the oracles with trivial initial segment
prefix-free complexity. A natural way to compare the compression power of oracles
was introduced in \cite{MR2166184} in the form of the reducibility $\leq_{LK}$.
\[
X\leq_{LK} Y\iff \exists c\forall \sigma\ (K^Y(\sigma)\leq K^X(\sigma)+c).
\]
In other words $X\leq_{LK} Y$ formalizes the notion that $Y$ can achieve an overall
compression of the strings that is at least as good as the compression achieved by $X$.
Moreover by \cite{millerdomi} it coincides with $X \leq_{LR} Y$ which denotes the relation
that every random sequence relative to $Y$ is also random relative to $X$.
The induced degree structure is known as the $LK$ degrees.

This measure of relative complexity has been studied extensively in the literature,
although most of the publications are written in terms of $\leq_{LR}$.
Moreover the structure of  the c.e.\ sets under 
$\leq_{LK}$ has also been studied. It is not hard to see that $\leq_T$ is contained in
$\leq_{LK}$ so the c.e.\ Turing degrees have more in common with the c.e.\ $LK$ degrees
than the c.e.\ $K$ degrees. In particular, the $LK$ degree of the halting set is complete.
Despite this, the two structures are not elementarily equivalent \cite{BarmpaliasCompress}.
In particular, there are no minimal pairs in the structure of  $LK$ degrees
of c.e.\ sets.
Analogues of various questions that were discussed in Section \ref{subse:merelcocoen}
have been addressed for $\leq_{LK}$. Splitting and non-splitting theorems 
(see questions (\ref{eq:splitinco}) and (\ref{eq:spli6tinco}))
where obtained in \cite{Barmpalias.ea:08, BarmpaliasM09} and are discussed in Section \ref{subese:compceosplit}.

The size of lower $\leq_{LK}$ cones (see question (\ref{eq:spliti3nco})) in terms of cardinality
was studied in \cite{Barmpalias.ea:08, Barmpalias.Stephan:08, joelowklowwea}
and was fully determined in \cite{Barmpalias:08} (for the c.e.\ case)
and in \cite{omunca} in general.
In \cite{Barmpalias:08} it was shown that for every c.e.\ set $A$ which is not $K$-trivial
the class $\{X\ |\ X\leq_{LK} A\}$ contains a perfect \pz class.
In \cite{Baarsbarmp} it was shown how this argument can be strengthened so that
the constructed \pz class does not have $K$-trivial members
(this is one of the arguments where avoiding the $K$-trivial paths in the constructed
\pz class is highly non-trivial). On the other hand in
\cite{Bacompa} it was shown that every \pz class with no $K$-trivial paths
contains a $\leq_{LK}$-antichain of size $2^{\aleph_0}$.
The combination of these results shows that the lower $\leq_{LK}$-cone below
any c.e.\ set which is not $K$-trivial is rather large, in the following sense.
\begin{coro}
If $A$ is c.e.\ and not $K$-trivial
then $\{X\ |\ X\leq_{LK} A\}$ contains a $\leq_{LK}$-antichain of size
$2^{\aleph_0}$.
\end{coro}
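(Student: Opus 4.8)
The plan is to assemble the three results quoted immediately above the statement. First I would invoke the strengthened form, due to \cite{Baarsbarmp}, of the perfect-class construction of \cite{Barmpalias:08}: since $A$ is c.e.\ and not $K$-trivial, there is a perfect \pz class $P$ such that every $X\in P$ satisfies $X\leq_{LK}A$ and, crucially, such that $P$ has no $K$-trivial members. Thus $P\subseteq\{X\mid X\leq_{LK}A\}$ and every member of $P$ is a non-$K$-trivial oracle.

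Next I would apply the result of \cite{Bacompa}, which says that every \pz class without $K$-trivial paths contains a $\leq_{LK}$-antichain of size $2^{\aleph_0}$. Applied to $P$, this produces a family $\mathcal{A}\subseteq P$ with $|\mathcal{A}|=2^{\aleph_0}$ on which $\leq_{LK}$ is an antichain. Since $\mathcal{A}\subseteq P\subseteq\{X\mid X\leq_{LK}A\}$, the family $\mathcal{A}$ is the desired $\leq_{LK}$-antichain sitting inside the lower cone of $A$, and the corollary follows.

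The one point worth flagging --- and the reason the detour through ``no $K$-trivial paths'' is genuinely needed rather than cosmetic --- is that all $K$-trivial sets lie in the single $\leq_{LK}$-degree of $\emptyset$, so a \pz class that contained $K$-trivial members could never by itself yield a large $\leq_{LK}$-antichain; one would first have to excise the $K$-trivial paths, and there is no reason that the result of removing such a subclass of a \pz class is again a \pz class. Consequently all the mathematical content lives in the cited theorems, and I expect no real obstacle in putting the corollary together beyond being careful to cite the $K$-trivial-free version of the perfect-class theorem (from \cite{Baarsbarmp}) rather than the original one (from \cite{Barmpalias:08}), together with the antichain theorem of \cite{Bacompa}.
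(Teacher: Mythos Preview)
Your proposal is correct and follows exactly the approach the paper takes: the corollary is obtained by combining the $K$-trivial-free perfect $\Pi^0_1$ class inside the lower $\leq_{LK}$-cone from \cite{Baarsbarmp} with the antichain theorem of \cite{Bacompa}. Your additional remark explaining why the detour through ``no $K$-trivial paths'' is genuinely necessary is a helpful clarification but does not depart from the paper's argument.
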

\noindent
In \cite{yulrchine} it was shown that  there are perfect sets
that do not contain any $\leq_{LK}$-antichains of size $2^{\aleph_0}$. In fact, quite
interestingly, it was shown that
there exists a perfect set of reals which is a chain
with respect to $\leq_{LK}$.

We have already mentioned that $\leq_{LK}$ contains $\leq_T$.
This fact allows various standard questions to be asked about the 
structure of the c.e.\ Turing degrees inside a c.e.\ $LK$ degree.
Such issues have been studied in
\cite{Barmpalias.ea:08, Barmpalias.Stephan:08}.
Ever c.e.\ $LK$ degree contains infinite chains and antichains
of Turing degrees. In fact, the following stronger result was shown, where $|_T$ denotes
Turing incomparability.
\begin{equation}\label{eq:strucinLKaT}
\parbox{11cm}{If $A$ is a noncomputable incomplete c.e.\ set then there exist
c.e.\ sets $B, C, D$ in the $LK$ degree of $A$ such that
$B<_T A<_T C$ and $D\ |_T A$.}
\end{equation}
The proof of \eqref{eq:strucinLKaT} consists of
encapsulating a number of basic c.e.\ Turing degree constructions 
(often involving the finite and infinite injury priority method) inside an $LK$ degree.

Another question about the structure of the c.e.\ $LK$ degrees is whether it is dense.
In \cite{Barmpalias.Stephan:08} it was shown that  if $A<_{LK} B$ for two c.e.\ sets whose
$LK$ degrees have $\leq_T$-comparable c.e.\ members  then there exists
a c.e.\ set $C$ such that   $A<_{LK} C<_{LK} B$. In particular, the $LK$ degrees
of c.e.\ sets are upward and downward dense. However the density of the
c.e.\ $LK$ degrees is an open question, also stated in 
\cite[Question 9.12]{MR2248590}. A relevant question is if there are
c.e.\ sets $A,B$ such that $A<_{LK} B$ and every c.e.\ set in the $LK$ degree
of $A$ is $\leq_T$-incomparable with every c.e.\ set in the $LK$ degree of $B$.
When the sets are not required to be c.e.\ this question admits a positive answer. 
This follows from the fact that each $LK$ degree is countable \cite{MR2166184}
and the fact that there are $LK$ degrees with uncountably many predecessors \cite{Barmpalias.ea:08}.

Finally, not much is known about least upper bounds in the $LK$
degrees of c.e.\ sets. For every sets $A,B$ a natural upper bound
in the Turing (and hence the $LK$) degrees is $A\oplus B$.
However although the Turing degree of $A\oplus B$ is the least upper bound
of the degrees of $A$, $B$ the same is not necessarily true for their $LK$
degrees, even when they are computably enumerable. 
This was first noticed in \cite{MR2166184} and various results since
show that in some cases the $LK$ degree of $A\oplus B$ is, in a certain sense,
very far from being the least upper bound of the degrees of $A$ and $B$
(e.g.\ \cite[Corollary 12]{Barmpalias.Stephan:08}). 
Diamondstone \cite{DiamondstoneLR}
showed that with respect to $\leq_{LK}$ every pair of
low sets has a low c.e.\ upper bound. If we consider a pair of low
c.e.\ sets $A,B$ such that $A\oplus B\equiv_T \emptyset'$
then by  \cite{DiamondstoneLR} the $LK$ degree of $A\oplus B$ is,
in a certain sense, very far from being the least upper bound of $A, B$.
We do not know of any pair of c.e.\ sets of incomparable $LK$ degrees which
have a least upper bound in the $LK$ degrees of c.e.\ sets.
We also do not know of a pair of
c.e.\ sets whose $LK$ degrees do not have a least upper bound
in the $LK$ degrees of c.e.\ sets.

\section{Proof of Theorem \ref{th:unifiscom}}\label{se:profthunifiscom}
%Let us call (c) the clause that $\{X\ |\ X\leq_K A\}$ is uniformly computable in $\emptyset'$.
%Then Theorem \ref{th:unifiscom} says that clauses (a), (b), (c) are equivalent.
The implication from (a) to (b) is a result from \cite{MR2166184}.
Since the sets that are truth-table reducible to $\emptyset'$ 
are uniformly $\emptyset'$-computable,
clause (b) implies (c).
For the remaining implication assume that $A$ is c.e.\ and not $K$-trivial.
Moreover let $(X_i)$ be a
 uniformly $\emptyset'$-computable family of sets. 
This means that there is a universal computable 
approximation $(X_i[s])$  such that each $X_i[s]$
converges to $X_i$ as $s\to\infty$.
It suffices to construct a computable approximation $B[s]$ converging to set $B$
such that $B\leq_K A$ and $B\neq X_i$ for all $i$.

For the satisfaction of 
$B\neq X_i$ we pick a number $n_i$ (a witness) 
and at each stage $s$ we let $B(n_i)[s]=1-X_i(n_i)[s]$.
Since $X_i(n_i)$ converges, $B(n_i)[s]$ converges 
to an appropriate value.
For $B\leq_K A$ we will construct a prefix-free machine
$M$ such that
\begin{equation}\label{eq:negreqkb}
K_M(B\restr_k)\leq K(A\restr_k)\ \textrm{\ \ \ for all $k$}
\end{equation}
where $K_M$ denotes the prefix-free complexity relative to the machine $M$.
Recall that $K$ denotes the prefix-free complexity relative to a fixed universal
prefix-free machine $U$. Without loss of generality we may assume that
$\texttt{wgt}(U)<2^{-2}$.
The enumeration of $M$ is straightforward. At each stage we look for the
least $k$ such that (\ref{eq:negreqkb})
is not satisfied and we enumerate an $M$-description of $B[s]\restr_k$ 
of length $K(A\restr_k)[s]$.
The condition $B\leq_K A$ is satisfied provided that we
manage to keep the weight of the requests that we enumerate into $M$ bounded.
This bound may be obtained via an analysis of the requests
that are enumerated in $M$ in relation to the descriptions that are produced
in $U$.

\subsection{The model}
 Each description  that is enumerated in $M$ corresponds to a unique
description in the domain of the universal machine $U$
of the same length. Indeed, when the construction
requests $M$ to describe some $B[s]\restr_x$ at stage $s+1$, 
this is in order to achieve $K_M(B\restr_x)[s]\leq K(A\restr_x)[s]$. Hence 
the new description in $M$ corresponds to the (least) shortest description
in $U$ of 
$A\restr_x[s]$. 
Since the approximation to $B$
changes in the course of the construction,
this correspondence is not one-to-one. 
 If a $U$-description $\sigma$ corresponds to $n$ distinct $M$-descriptions
we say that $\sigma$ is used $n$ times.

Let $S_0$ be the domain of $U$ 
and for each $k>0$ let $S_k$ contain the 
descriptions in the domain of $U$ which 
are used at least $k$ times. Note that $S_{i+1}\subseteq S_i$ for
each $i$. According to the 
correspondence that we defined 
between the domains of $U$, $M$ a string $\sigma$
in the domain of $U$ that is used $k$ times incurs weight $k\cdot 2^{-|\sigma|}$ 
to the domain
of $M$. Hence (\ref{eq:Mwbou}) holds.
\begin{equation}\label{eq:Mwbou}
\mathtt{wgt}(M)\leq \sum_k \mathtt{wgt}(S_k).
\end{equation}
Note that we need not explicitly have a factor $k$ in the above sum, as
a string in $S_k$ is also in $S_i$ for $i<k$, so it is counted $k$ times.
A $U$-description is called {\em active}
at stage $s$ if $U(\sigma)[s]\subseteq A[s]$.
By the direct way that $M$ is enumerated, all descriptions that enter $S_1$ at
some stage $s$ are currently active. More generally, only
currently active strings may move
from $S_k$ to $S_{k+1}$ at any given stage. 

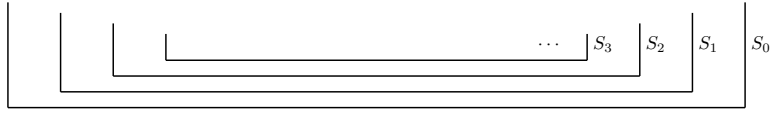
\begin{figure}
 \scalebox{0.7}{
\begin{tikzpicture}
\draw[black, thick] (-7,0)  -- (7,0);
\draw[black, thick] (-7,0)  -- (-7,2);
\draw[black, thick] (7,0)  -- (7,2);
%%%%%%%%%%%%%%%%%
\draw[black, thick] (-6,0.3)  -- (6,0.3);
\draw[black, thick] (-6,0.3)  -- (-6,1.8);
\draw[black, thick] (6,0.3)  -- (6,1.8);
%%%%%%%%%%%%%%%%%
\draw[black, thick] (-5,0.6)  -- (5,0.6);
\draw[black, thick] (-5,0.6)  -- (-5,1.6);
\draw[black, thick] (5,0.6)  -- (5,1.6);
%%%%%%%%%%%%%%%%%
\draw[black, thick] (-4,0.9)  -- (4,0.9);
\draw[black, thick] (-4,0.9)  -- (-4,1.4);
\draw[black, thick] (4,0.9)  -- (4,1.4);
%%%%%%%%%%%%%%%%%
\node  at (7.3,1.2) {$S_0$};
\node  at (6.3,1.2) {$S_1$};
\node  at (5.3,1.2) {$S_2$};
\node  at (4.3,1.2) {$S_3$};
\node  at (3.3,1.2) {$\cdots$};
\end{tikzpicture}}
\caption{{\small Infinite nested decanter model.}}
\label{fig:treq}
\end{figure}
The sets $S_k$ may be visualized as the nested containers of the
infinite decanter model of Figure \ref{fig:treq}. As the figure indicates,
descriptions may move from $S_k$ to $S_{k+1}$ but they 
also remain in $S_k$. 
%As we elaborate in Section
%\ref{subse:movmauxma}, each  witness $n_i$ will be  
%associated with a marker $m_i>n_i$ (moreover both $n_i,m_i$ will be movable). 
If $B(n_i)[s]\neq B(n_i)[s+1]$ at some stage $s+1$ for some witness $n_i$,
some strings may move from $S_k$ to $S_{k+1}$ for various $k\in\Nat$ at some later stage.
 In this case we say that these strings
were reused by $n_i$. In order to hold a unique $n_i$ responsible for the reuse of a $U$-description,
we only consider the one with the least $i$ that reused the string. In this way,
every time that a description is reused, it is because a unique $n_i$ (the marker that
changed the least digit of $B$ that caused the enumeration of the new $M$-description).

\subsection{Movable markers and auxiliary machines}\label{subse:movmauxma}
A single witness $n_i$ may reuse each description in the
domain of $U$ at most once, thus contributing to a 2:1 
correspondence between the domains of $M$ and
$U$. However $t$ many witnesses may create a $2^t:1$ correspondence
between $U,M$ which may inflate the bound in (\ref{eq:Mwbou}).
For this reason the witnesses $n_i$ will be  movable and
obey the following rules (provided that they are defined).
\begin{itemize}
\item $n_i[s]< n_{i+1}[s]$ and $n_i[s]\leq n_{i}[s+1]$;
\item If $B(n_i)[s]\neq B(n_i)[s+1]$ then $n_{i+1}[s+1]$ moves to
a {\em large} value.
\end{itemize}
The witnesses $n_i[s]$ will ultimately reach limits
$n_i$.
In order to ensure that the descriptions in $U$ that are reused many times
have sufficiently small weight (i.e.\ they describe sufficiently complex 
strings), for each witness $n_i$ we enumerate a prefix-free machine
$N_i$ during the construction. The purpose of $N_i$ is
to achieve $\forall z\ (K_{N_i}(A\restr_z)\leq K(z)+c_i)$
for some constant $c_i$.
Since $A$ is not $K$-trivial, this will ultimately fail. However this failure
will help to demonstrate that $n_{i+1}$
converges. 
Each time $n_i$ moves, 
the value of $c_i$ increases by 1 and $N_i$ is initialised (i.e.\ all
of its computations
are deleted).
Such an event is described as an `injury' of $n_i$.
In particular, if at some stage $s$
a witness $n_i$ moves while $n_j$, $j<i$ remain constant
this causes $n_x$, $x\geq i$ to be injured,
which has the following consequences:
\begin{itemize}
\item $n_x$, $x> i$ become undefined;
\item the values $c_x, x\geq i$ increase by 1
\item $N_x$ is initialized.
\end{itemize}
Each witness will only be injured finitely many times.
We let $c_i[s]$ denote the 
value of $c_i$ at stage $s$ and
set $c_i[0]=i+4$.

At each stage $s$ let $t_i[s]$ be the least number $t$ such that
$K_{N_i}(A\restr_t)[s]> K(t)[s]+c_i[s]$.
Each witness $n_i$ has the incentive to move 
its successor $n_{i+1}$ at some stage $s+1$ if it 
observes a set of descriptions of segments of $A[s]$ 
that are longer than  $n_{i+1}$, of sufficient weight.
This weight is determined by the threshold $q_i[s]$ 
and is set to $2^{-K(t_i)[s]-c_i[s]}$. Witness $n_i$ may move
its successor $n_{i+1}$
either because the above weight exceeds the threshold or because
the approximation to $B(n_i)$ changes. 
Due to this second incentive for movement,
a new parameter $p_i[s]$ will tune the threshold to an appropriate value.
In particular, witness $n_i$ requires attention at stage $s+1$ if
it is defined and one of the following occurs:
\begin{itemize}
\item[(a)] $B(n_i)[s]=X_i(n_i)[s]$;
\item[(b)] $\sum_{n_{i+1}[s]<j\leq s} 2^{-K(A\restr_j)[s]} \geq q_i[s] - p_i[s]$.
\end{itemize}
At each stage $s+1$ the machines $N_i$ will be adjusted according to
changes of $K(n)$ for $n<t_i[s]$. This is done by running the 
following subroutine.
\begin{equation}\label{eq:tsubroureprele}
\parbox{9cm}{For each $i\leq s$ and each $n< t_i[s]$, 
if $K(n)[s+1]<K(n)[s]$ then enumerate an $N_i$-description
of $A[s]\restr_n$ of length $K(n)[s+1]+c_i[s]$.}
\end{equation}
A {\em large} number at stage $s+1$ is one that is larger than any 
number that has been
the value of any parameter in the construction up to stage $s$.

\subsection{Construction of \texorpdfstring{$B, M, N_r$}{B,M,N}}
At stage 0 place $n_0$ on $0$.
At stage $s+1$ run (\ref{eq:tsubroureprele}).
If none of the currently defined witnesses requires attention,
let $k$ be the largest number with $n_k[s]\de$, let 
$z$ be the least number that is bounded by $s$
and the current value of some marker
such that $K_M(B\restr_z)[s]>K(A\restr_z)[s]$ and 
\begin{itemize}
\item place $n_{k+1}$ on the least {\em large} number;
\item  enumerate an $M$-description of $B[s]\restr_z$ of length
$K(A\restr_z)[s]$.
\end{itemize}
Otherwise let $x$ be the least number such that $n_x$ 
requires attention and
define $n_{x+1}[s+1]$ to be a {\em large} number.
Moreover declare $n_i[s+1]$, $i> x+1$ undefined,
set $c_j[s+1]=c_j[s]+1$ for each $j>x$ and if (a) applies
set $B(n_x)[s]=1-X_i(n_x)[s]$. If (b) applies set $p_x[s+1]=0$ and
enumerate an $N_x$-description of $A\restr_{t_x}[s]$ of
length $K(t_x)[s] +c_x[s]$. 
If (b) does not apply set 
$p_x[s+1]=p_x[s]+\sum_{n_{x+1}[s]<j\leq s} 2^{-K(A\restr_x)[s]}$. 

\subsection{Verification}\label{subse:versingt}
When $n_{i+1}$ is first defined
at some stage $s$ it takes a {\em large} value so $t_i[s]< n_{i+1}[s]$.
Moreover $t_i$ can only increase (without $n_i$ being injured) when $N_i$ computations are enumerated
on strings of length $t_i$,
which happens only when
$n_{i+1}$ moves. Hence by induction we have (\ref{eq:timirele}).
\begin{equation}\label{eq:timirele}
\parbox{8cm}{For all $i,s$, if 
$n_{i+1}[s]$ is defined then $t_i[s]< n_{i+1}[s]$.}
\end{equation}
If $K(n)$ decreases at some stage $s+1$
for some $n<t_i[s]$, subroutine
(\ref{eq:tsubroureprele}) will ensure that $K_{N_i}(A\restr_n)[s+1]\leq K(n)[s+1]+c_i[s+1]$.
Hence $t_i$ may only decrease at
$s+1$ if $A[s+1]\restr_{t_i[s]}\neq A[s]\restr_{t_i[s]}$, which implies (\ref{eq:tmovmonle}).
\begin{equation}\label{eq:tmovmonle}
\parbox{10cm}{If  $A[s]\restr_{t_i[s]}=A[s+1]\restr_{t_i[s]}$ then $t_i[s]\leq t_i[s+1]$.}
\end{equation}
The enumeration of descriptions into $N_i$ occurs with overhead $c_i$,
in the sense that at stage $s$ any description of a string of length $n$
that is defined in $N_i$ has length $K(n)[s]+c_i[s]$. This, along with the initialisation
of $N_i$ upon an injury of $n_i$  implies (\ref{eq:Nidescovb}).
\begin{equation}\label{eq:Nidescovb}
\parbox{8.5cm}{At any stage $s$ the weight of the $N_i$-descriptions that describe initial segments of 
$A[s]$ is less than $2^{-c_i[s]}$.}
\end{equation}
For each $i$ there is a machine $N_i$ as
prescribed in the construction.
\begin{lem}\label{le:nibound}
For each $i$ the weight of the requests in $N_i$ is bounded.
\end{lem}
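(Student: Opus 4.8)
The plan is to fix $i$ and show that $\mathtt{wgt}(N_i)<1$ --- in fact $\mathtt{wgt}(N_i)=O(2^{-i})$ --- which is all that is needed for $N_i$ to exist as a prefix-free machine via Kraft--Chaitin. Descriptions are enumerated into $N_i$ from exactly two places: the subroutine \eqref{eq:tsubroureprele}, and the clause-(b) action of $n_i$ in the construction; I would bound these two contributions separately. Two facts carry the argument: the domain of $U$ is prefix-free, so any sum of terms $2^{-(\text{length of a chosen }U\text{-description})}$ taken over $U$-descriptions of strings of pairwise distinct lengths is at most $\mathtt{wgt}(U)<2^{-2}$; and every $N_i$-description is issued with an overhead $c_i[s]\ge c_i[0]=i+3$ which, since $c_i$ only ever increases, supplies the geometric decay needed to control re-enumerations.

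First I would handle the subroutine \eqref{eq:tsubroureprele}. A description of a string of length $n$ is issued by \eqref{eq:tsubroureprele} only at a stage $s+1$ at which $K(n)$ strictly decreases, and then it has length $K(n)[s+1]+c_i[s]$. For a fixed $n$ the lengths so issued are pairwise distinct and all at least $K(n)$, since they come from the strictly decreasing values $K(n)[s+1]$, so the total weight they contribute is at most $2^{-c_i[0]}\sum_{m\ge K(n)}2^{-m}=2^{-c_i[0]+1}\cdot 2^{-K(n)}$. Summing over $n$, and using that the shortest $U$-descriptions of the numbers $n$ form a prefix-free set so that $\sum_n 2^{-K(n)}\le\mathtt{wgt}(U)<2^{-2}$, the total weight enumerated into $N_i$ by \eqref{eq:tsubroureprele} is at most $2^{-c_i[0]-1}$. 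This part is routine.

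The bulk of the argument, and the main obstacle, is bounding the weight enumerated by the clause-(b) actions of $n_i$, and this is precisely what the parameters $p_i$ and $q_i$ are for. Each (b)-action at a stage $s+1$ enumerates into $N_i$ a single description of $A\restr_{t_i[s]}[s]$ of weight $q_i[s]=2^{-K(t_i[s])[s]-c_i[s]}$, and it fires exactly because the observed weight $\sum_{n_{i+1}[s]<j\le s}2^{-K(A\restr_j)[s]}$ has reached $q_i[s]-p_i[s]$; immediately afterwards $p_i$ is reset to $0$ and $n_{i+1}$ is moved to a fresh large value, and between (b)-actions $p_i$ only grows by the observed weights recorded at the (a)-actions of $n_i$. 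I would organise the (b)-actions into blocks delimited by the injuries of $n_i$ (so $c_i$ is constant on a block) and by the (a)-actions of $n_i$ (so $n_{i+1}$, hence by \eqref{eq:timirele} an upper bound for $t_i$, is constant between consecutive (b)-actions of a block), and then run a charging argument: every unit of $N_i$-weight put in by a (b)-action is matched against either freshly observed weight of $U$-descriptions of initial segments of $A$ longer than the current $n_{i+1}$, or weight carried over in $p_i$ from an earlier (a)-action, with $p_i$ serving exactly to prevent any such unit from being charged twice; and the supply of observed weight is bounded because at any stage $\sum_{j}2^{-K(A\restr_j)[s]}\le\mathtt{wgt}(U)<2^{-2}$ (strings of pairwise distinct lengths have prefix-incomparable shortest $U$-descriptions), with \eqref{eq:timirele}, \eqref{eq:tmovmonle} and \eqref{eq:Nidescovb} governing how $t_i$ and the currently active weight of $N_i$ can move within a block. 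This yields an $O(2^{-c})$ bound for the (b)-weight accumulated while $c_i=c$; summing over the successive values $c=c_i[0],c_i[0]+1,\dots$ of $c_i$ gives a convergent geometric series of total $O(2^{-c_i[0]})$. Adding the subroutine estimate and using $c_i[0]=i+3$ and $\mathtt{wgt}(U)<2^{-2}$ gives $\mathtt{wgt}(N_i)<1$. The one genuinely delicate point is this charging step for clause (b): one must show that the bookkeeping via $p_i$ across the (a)-actions and the restarts of $n_{i+1}$ at large values really does bound the \emph{total} weight poured into $N_i$, and not merely the weight that is active at a single stage, which is all that \eqref{eq:Nidescovb} gives; the subroutine bound and the final arithmetic are straightforward.
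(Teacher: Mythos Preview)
Your charging idea for the clause-(b) contributions --- match each $N_i$-request of weight $q_i[s]$ against the observed $U$-weight accumulated (in $p_i$ and at the current stage) since the previous (b)-action, using that $n_{i+1}$ jumps to a fresh large value after every action so that no length is observed twice --- is exactly the mechanism the paper uses. Your subroutine bound is also fine (and in fact sharper than the paper's).

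The gap is the step ``this yields an $O(2^{-c})$ bound for the (b)-weight accumulated while $c_i=c$.'' Nothing you have written establishes that, and it does not follow from the charging together with \eqref{eq:timirele}, \eqref{eq:tmovmonle}, \eqref{eq:Nidescovb}. The inequality $\sum_j 2^{-K(A\restr_j)[s]}\le\mathtt{wgt}(U)$ that you quote is a bound at a \emph{single} stage; within one block the observed weight can still be as large as $\mathtt{wgt}(U)$, so the per-block contribution is only $O(1)$, and summing that over (possibly infinitely many) blocks gives nothing. The facts \eqref{eq:timirele}--\eqref{eq:Nidescovb} do not rescue this: \eqref{eq:Nidescovb} bounds only the \emph{active} $N_i$-weight at a stage, as you yourself note, and there is no reason the inactive part of the block should be $O(2^{-c})$. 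You seem to be importing the geometric-over-$c$ structure from the later bound on $\mathtt{wgt}(M)$, where it is genuinely needed; here it is not.

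The fix is to drop the block decomposition entirely and draw the correct conclusion from your own charging. Because the observation intervals $(n_{i+1}[s],s]$ at successive (a)- and (b)-actions are pairwise disjoint (each such action sends $n_{i+1}$ past $s$), every length $j$ is observed at most once over the whole construction, and the single observation contributes at most $w_j$, the total $U$-weight on strings of length $j$. Hence the total observed weight across \emph{all} stages is at most $\sum_j w_j=\mathtt{wgt}(U)<2^{-2}$, and by the charging the total (b)-weight is bounded by the same quantity. Together with the subroutine bound this gives $\mathtt{wgt}(N_i)\le 2^{-1}$, which is precisely the paper's argument; the stronger $O(2^{-i})$ estimate you aimed for is neither obtained nor required.
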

\begin{proof}
The weight of the requests that are enumerated in $N_i$
by subroutine (\ref{eq:tsubroureprele}) is bounded by
the weight of the domain of $U$, which is at most $2^{-2}$.
In order to calculate the weight of the requests
that are enumerated by the main construction, 
let $s_j$ be the stages where requests are enumerated into $N_i$
(within an interval of stages where $N_i$ is not initialised).
Note that during each interval $[s_j, s_{j+1})$ the 
successor witness $n_{i+1}$ may move
many times, thereby increasing $p_i$ which becomes 0 at $s_{j+1}$.
At each $s_j$  
the witness $n_{i+1}$ moves to a {\em large} value  and
the weight of the request that is issued in $N_i$ is
$q_i[s]\leq \sum_{x\in (n_{i+1}[s_{j-1}], s]} w_x$,
where $s_{-1}=0$ and $w_x$ is the weight of the descriptions in $U$ that
describe strings of length $x$.
Hence by induction the weight of the requests that are enumerated in $N_i$
in this way is also bounded by the weight of the domain of $U$.
Hence $\mathtt{wgt}(N_i)\leq 2^{-2}+2^{-2}=2^{-1}$.
\end{proof}

In order to calculate a suitable upper bound for each
$\mathtt{wgt}(S_k)$ of (\ref{eq:Mwbou}) we need (\ref{eq:bfolocoAnsk}).
Recall that  if an
$M$-description $\tau$ of  $B[s]\restr_z$ 
of length $K(A\restr_z)[s]$ is enumerated at stage $s+1$ and
$\sigma$ is the leftmost string of length  $K(A\restr_z)[s]$ 
describing $A[s]\restr_{z}$ via $U[s]$,
then we say that $\tau$ corresponds to $\sigma$.
Suppose that $\tau$ already corresponds to $\sigma$ at some stage $s$,
and at a latter stage $t$ some new $\tau'$ gets corresponded to $\sigma$.
Then the segments that are described by $\tau, \tau'$ (which are approximations of $B$)
must differ at some least position $k$. This change in the approximation to $B$
must have occurred due to the change of $B(k)$ by some marker $n_i$
that rested on $k$ at some previous stage. In this case we say that 
$\sigma$ is reused by $n_i$.
In this way, every reused $U$-description 
 is reused by a unique marker $n_i$. 
 Also recall that a $U$-description $\sigma$ is active at stage $s$ if
 $U(\sigma)$ is an initial segment of $A[s]$.
\begin{equation}\label{eq:bfolocoAnsk}
\parbox{11cm}{Suppose that during the interval of stages $[k,r]$ 
the witness $n_j$  is not injured. 
Then the weight of the strings that is reused by $n_{j+1}$ during
this interval which remain active at stage $r$ is at most $2^{-c_j[k]}$.}
\end{equation}
The witness $n_{j+1}$ can only be held responsible for
the reuse of $U$-descriptions at stages where it is defined
and stable (i.e.\ it does not move). Only at those stages it can
change $B(n_{j+1})$, thus causing a reuse of $U$-descriptions. When $n_{j+1}$ moves at
some stage $s+1$,
the weight of the $U$-descriptions it is held responsible for reusing
since its last move is the increase in $p_j$ since the last stage it moved
(excluding the stage where it last moved).
In the special case where $p_j[s+1]$ is set to 0, the  
weight of the $U$-descriptions it is held responsible for reusing
since the last time that $p_j$ was set to 0 (excluding this stage) is $q_j[s]$ (which is essentially
the addition of all the increases in $p_j$ since that stage).

Let $(s_i)$ be a monotone enumeration of those stages $s+1$ where $p_j$ is set to 0.
According to \eqref{eq:timirele}, the weight
we wish to bound for \eqref{eq:bfolocoAnsk} consists only of the increases
obtained in the intervals $[s_i, s_{i+1}]$ such that
$A\restr_{t_i[s_i]}$ is a prefix of $A[r]$. 
Let $(s^{\ast}_i)$ be the subsequence of $(s_i)$ consisting of those stages.
Then the weight we wish to bound for \eqref{eq:bfolocoAnsk}
is bounded by $\sum_i q_j[s_i^{\ast}]$.
According to (\ref{eq:tmovmonle}) we have that
$t_j[s^{\ast}_i]\leq t_j [s^{\ast}_{i+1}]$ and by the choice of
$(s^{\ast}_i)$ we have $t_j[s^{\ast}_i] < t_j [s^{\ast}_{i+1}]$.
Now recall that $q_j[s_i^{\ast}]=2^{-K(t_j)[s_i^{\ast}]-c_j[k]}$.
So $\sum_i q_j[s_i^{\ast}]\leq \sum_i 2^{-c_j[k]-K(i)}$ which is bounded by
$2^{-c_j[t]}$.
 This concludes the proof of 
(\ref{eq:bfolocoAnsk}).

\begin{lem}\label{le:Mbound}
The weight of the requests that are enumerated in $M$ is finite.
\end{lem}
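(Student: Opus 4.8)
The plan is to bound the weight of the requests in $M$ by way of the nested decanter estimate \eqref{eq:Mwbou}, so that it suffices to show $\sum_{k} k\cdot \mathtt{wgt}(S_k)<\infty$. Since $S_0$ is the domain of $U$, with $\mathtt{wgt}(S_0)<2^{-2}$, and $S_{k+1}\subseteq S_k$, the task reduces to showing that the weights $\mathtt{wgt}(S_k)$ decrease geometrically fast in $k$, say $\mathtt{wgt}(S_k)\le 2^{-k}$ up to a fixed multiplicative constant.

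The structural facts to exploit are the following. Each addition of a string to, and each upward move in, the hierarchy $S_0\supseteq S_1\supseteq\cdots$ happens while the string is active and is a reuse by a uniquely determined witness; a given witness $n_j$ reuses each $U$-description at most once over the whole construction, because its successive versions are placed at \emph{large} values and hence handle pairwise disjoint intervals of positions; and the indices of the witnesses that successively reuse a fixed $U$-description are strictly decreasing. Consequently a string lying in $S_k$ has been reused $k$ times, so its \emph{first} reuse was carried out by a witness of index at least $k-1$; charging the weight of each string in $S_k$ to the witness that first reused it gives $\mathtt{wgt}(S_k)\le\sum_{j\ge k-1}(\text{weight of all }U\text{-descriptions ever reused by }n_j)$. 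To close the argument one then bounds, for each witness $n_j$, this total reuse weight by a geometrically small quantity of order $2^{-j}$: one applies \eqref{eq:bfolocoAnsk} on each maximal injury-free interval of $n_j$, sums the $2^{-c_j}$-terms using $c_j[0]=j+3$ and that $c_j$ rises by one at each injury (so these terms telescope to at most $2^{-j-2}$), and absorbs the $p_j$-terms and the reuses that come bundled with $N_j$-enumerations into the accounting behind Lemma \ref{le:nibound} (bounded weight of $N_j$) together with the threshold $q_j=2^{-K(t_j)-c_j}$, which attaches an overhead comparable to $2^{-c_j}$ to each batch of such reuses. The geometric tails then sum, giving $\sum_k k\cdot\mathtt{wgt}(S_k)<\infty$, hence $\mathtt{wgt}(M)<\infty$ by \eqref{eq:Mwbou}.

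I expect the main obstacle to be exactly this per-witness weight accounting: one must show that, despite $n_j$ being injured, moving its successor $n_{j+1}$, and flipping $B(n_j)$ possibly many times, the total weight of the $U$-descriptions it reuses is summable in $j$. This is where the auxiliary machines $N_j$ and the thresholds $q_j$ really do the work, together with the non-$K$-triviality of $A$, which forces $t_j$ to stabilise at a finite value and hence keeps the number of $N_j$-enumerations — and thus the number of injuries of $n_{j+1}$ — finite. A secondary but delicate point is the qualifier ``remain active at $r$'' in \eqref{eq:bfolocoAnsk}: reuses of strings that are deactivated before the end of the relevant injury-free interval must be charged against an earlier interval, or against the change in $A$ that deactivated them. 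This is handled by nesting the estimate over the successive injuries of $n_j$ and over the finitely many configurations through which $A\restr_z$ passes for the positions $z$ in $n_j$'s interval, using that within a single epoch of $A\restr_z$ the reused descriptions have strictly decreasing length and hence geometrically decaying weight.

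If one prefers to avoid the ``first reuser'' charging above, an alternative is to prove directly, by induction on $k$, that the total weight released into $S_{k+1}$ is at most half the weight released into $S_k$: every promotion from $S_k$ to $S_{k+1}$ is a reuse of a string already reused $k$ times, and the machines $N_j$ guarantee that each passage of weight down one level of the decanter is paid for by a comparable enumeration into a bounded-weight machine, incurring a definite loss factor. Either way the point is the same, and I regard getting a clean, fully rigorous version of this loss-factor estimate — reconciling the level index $k$, the witness indices, the injury count, and the parameters $c_j,p_j,q_j,t_j$ in one convergent double sum — as the crux of the proof.
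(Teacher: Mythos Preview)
Your plan is the paper's argument: control $\mathtt{wgt}(M)$ through \eqref{eq:Mwbou}; observe that successive reuses of a fixed $U$-description are by witnesses of strictly decreasing index, so a string that reaches $S_{k+1}$ was first placed in $S_1$ by some $n_z$ with $z\ge k-1$; partition $S_{k+1}$ accordingly and apply \eqref{eq:bfolocoAnsk} across the injury-free intervals of $n_z$; sum the resulting $2^{-c_z}$-terms geometrically (since $c_z$ rises by one at each injury and $c_z[0]=z+3$) to get the per-witness bound $2^{-z-2}$; finally $\sum_{z\ge k-1}2^{-z-2}=2^{-k}$.

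Two of your anticipated obstacles are not obstacles. First, the non-$K$-triviality of $A$ plays no role in this lemma: the paper explicitly remarks that one does \emph{not} assume the sequence of injury stages $(s_i)$ is finite, and the geometric series $\sum_i 2^{-c_z[0]-i}$ converges regardless of how many terms it has. Non-$K$-triviality is invoked only later, in Lemma~\ref{le:miconv}, to show the markers converge; importing it here is unnecessary and obscures the logical structure. Second, the qualifier ``remain active at $r$'' in \eqref{eq:bfolocoAnsk} is not a complication to be worked around but the whole point. A string first used by $n_z$ during an injury-free interval $(s_i,s_{i+1}]$ can only be promoted further by a witness of smaller index, hence only after $s_{i+1}$; and since $A$ is c.e., a description inactive at $s_{i+1}$ stays inactive forever. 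Thus strings deactivated before $s_{i+1}$ simply never reach $S_{k+1}$ and require no separate charge against ``an earlier interval, or against the change in $A$''. Taking $r=s_{i+1}$, the bound from \eqref{eq:bfolocoAnsk} is already exactly what is needed, and the $p_z$-term vanishes because $p_z$ is $0$ at the start of each injury-free interval ($n_z$ and its successor having just been freshly defined). There is nothing further to absorb into the accounting for $N_j$ or $q_j$.
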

\begin{proof}
Since only strings in the domain of $U$ are used,
$\mathtt{wgt}(S_1)<2^{-2}$ and since $S_2\subseteq S_1$
we also have $\mathtt{wgt}(S_2)<2^{-2}$.  Let $k>1$.
Every entry of a string into $S_{k+1}$ 
is due to some witness $n_x$ which reused it when it was already in
$S_k$. Since $k>1$, this string entered $S_k$ due to another witness 
$n_y$ with $y>x\geq 0$ which subsequently moved to a {\em large}
value.
Inductively, that string entered $S_1$ due to a witness $n_{z+1}$ 
with $z\geq k-2$. 
Fix $z$, let $S_k^z$ contain the strings in $S_k$ that entered $S_1$
due to witness $n_{z+1}$ and let $(s_i)$ be the increasing sequence of 
stages where witness $n_{z}$  (i.e.\ the predecessor of $n_{z+1}$) is injured. 
Note that at this point we do not assume that
$(s_j)$ is finite.

The strings that move from
$S^z_k$ to $S_{k+1}$  are currently active and may be
divided into the packets of strings that enter $S_1$
due to $n_{z+1}$ during each interval $[s_i, s_{i+1}]$. 
Note that we need not consider the endpoints $s_i, s_{i+1}$
as in those stages marker $n_{z+1}$ is not charged with such movement of strings
through the containers. 
According to (\ref{eq:bfolocoAnsk})
the weight of the strings in the $i$th packet  
which are still active at $s_{i+1}$ 
(hence, may move from $S_k^z$ to $S_{k+1}$ later on) 
is bounded by $2^{-c_z[s_i-1]}$.
So the weight of the strings that enter
$S_{k+1}$ from $S^z_k$ is bounded by $\sum_j 2^{-c_z[s_j-1]}$.
Since $c_z[s_{j+1}-1]=c_z[s_j] > c_z[s_{j}-1]$ for
all $j$,  this weight is bounded by $\sum_j 2^{-c_z[0]-j}=2^{-c_z[0]+1}$.
Since $c_z[0]=z+4$ this bound becomes
$2^{-z-3}$.
Since $S_k=\cup_{z\geq k-2} S_k^z$
the total weight of the strings
that enter
$S_{k+1}$ from $S_k$ is bounded by $\sum_{z\geq k-2} 2^{-z-3}=2^{-k}$. 
Therefore by (\ref{eq:Mwbou}) the weight of $M$ is finite.
\end{proof}
\noindent
By Lemma \ref{le:Mbound} the machine $M$ prescribed in the construction
exists.
The following proof uses the fact that each $N_i$ is a prefix-free machine,
which was established in Lemma \ref{le:nibound}.
\begin{lem}\label{le:miconv}
For each $i$ the witness $n_i$ moves
only finitely many times i.e.\ $n_i[s]$ reaches a limit.
\end{lem}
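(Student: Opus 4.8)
The plan is to prove the lemma by induction on $i$. The base case $i=0$ is immediate: $n_0$ is placed on $0$ at stage $0$ and can never move afterwards, since it could only be redefined if some $n_j$ with $j<0$ required attention, or when the frontier of defined witnesses reached index $0$, and neither ever happens. For the inductive step assume $i\geq 1$ and that each $n_j$ with $j<i$ reaches a limit. Since each $X_j[s]$ converges and each such $n_j$ moves only finitely often, I can fix a stage $s_0$ after which $n_j[s]$, $c_j[s]$, $B(n_j)[s]$ and $X_j(n_j)[s]$ are constant for all $j<i$ and no further flip of any $B(n_j)$ via clause $(a)$ occurs. Inspecting the construction, after $s_0$ no witness $n_j$ with $j\leq i-2$ can require attention: clause $(a)$ is excluded because $B(n_j)\neq X_j(n_j)$ is now stable, and if clause $(b)$ caused some such $n_j$ to require attention then the least witness requiring attention at that stage would have index $\leq i-2$, so the next marker, of index $\leq i-1$, would move, contradicting the stability chosen at $s_0$. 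Consequently, after $s_0$ the witness $n_i$ is never declared undefined, so it is (re)born at most once more and thereafter its value changes only when $n_{i-1}$ requires attention (via the rule that sets $n_{x+1}$ to a large value). Hence it suffices to show that $n_{i-1}$ requires attention only finitely often after $s_0$.

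Requiring attention via clause $(a)$ happens only when $X_{i-1}(n_{i-1})[s]$ changes, hence finitely often, since $n_{i-1}[s]$ is eventually constant. The substantive part — and the main obstacle — is bounding the number of times clause $(b)$ applies to $n_{i-1}$ after $s_0$. Note that after $s_0$ the parameter $c_{i-1}[s]$ has reached a final value $c_{i-1}^{\ast}$, and by Lemma \ref{le:nibound} the machine $N_{i-1}$ is a genuine prefix-free machine, so $K(\rho)\leq K_{N_{i-1}}(\rho)+d$ for some constant $d$ and all strings $\rho$. Assume for a contradiction that clause $(b)$ applies to $n_{i-1}$ at infinitely many stages after $s_0$.

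The key claim will be that under this assumption $t_{i-1}[s]\to\infty$. Granting the claim, for every fixed $t$ we eventually have $t<t_{i-1}[s]$, so by the definition of $t_{i-1}$ we get $K_{N_{i-1}}(A\restr_t)[s]\leq K(t)[s]+c_{i-1}^{\ast}$; passing to the limit, $K_{N_{i-1}}(A\restr_t)\leq K(t)+c_{i-1}^{\ast}$ and hence $K(A\restr_t)\leq K(t)+c_{i-1}^{\ast}+d$ for all $t$, contradicting that $A$ is not $K$-trivial. To prove the claim I fix $T$ and a stage $s^{\ast}\geq s_0$ after which $A\restr_{T+1}$ and all the values $K(m)$, $m\leq T$, have stabilised. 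Whenever clause $(b)$ applies to $n_{i-1}$ at a stage $s+1$, the construction enumerates into $N_{i-1}$ a description of $A[s]\restr_{t_{i-1}[s]}$ of length $K(t_{i-1})[s]$; if moreover $s\geq s^{\ast}$ and $t_{i-1}[s]\leq T$, then this is a description of the final string $A\restr_{t_{i-1}[s]}$ of length $K(t_{i-1}[s])$, so $K_{N_{i-1}}(A\restr_{t_{i-1}[s]})<K(t_{i-1}[s])+c_{i-1}^{\ast}$ permanently, and since every $m<t_{i-1}[s]$ is already a non-failure at stage $s$ and, being $\leq T$, stays one after $s^{\ast}$, we obtain $t_{i-1}[r]>t_{i-1}[s]$ for all $r>s$. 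Hence the values $t_{i-1}[s]$ recorded at post-$s^{\ast}$ applications of clause $(b)$ with $t_{i-1}[s]\leq T$ are strictly increasing and bounded by $T$, so only finitely many such applications occur; choosing a later post-$s^{\ast}$ application of clause $(b)$ with $t_{i-1}[s]>T$ and then using that $K_{N_{i-1}}(A\restr_m)[s]$ is non-increasing in $s$ once $A\restr_m$ has stabilised, one checks that $t_{i-1}[r]>T$ for all later $r$. Since $T$ is arbitrary this gives $t_{i-1}[s]\to\infty$, completing the argument.

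The hard part is this last paragraph, where the non-$K$-triviality of $A$ is actually consumed: one must convert ``clause $(b)$ keeps firing for $n_{i-1}$'' into ``$N_{i-1}$ eventually describes every $A\restr_t$ to within $c_{i-1}^{\ast}$ of $K(t)$''. The subtlety to watch is that $t_{i-1}$ may temporarily decrease at stages that are not applications of clause $(b)$ (because of changes in $A$ or in $K$), so the argument cannot be run on those applications alone; it has to rest on the monotonicity of $K_{N_{i-1}}(A\restr_m)[s]$ after the relevant finite piece of $A$ and the relevant values of $K$ have settled. The remaining bookkeeping — the behaviour of $c_{i-1}$, $p_{i-1}$ and $q_{i-1}$, and the reduction from ``$n_i$ moves'' to ``$n_{i-1}$ requires attention'' — is routine given Lemma \ref{le:nibound} and the rules governing the movable markers.
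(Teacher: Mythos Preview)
Your argument is correct and follows essentially the same route as the paper: induct on $i$, reduce the movement of $n_i$ to $n_{i-1}$ requiring attention, and bound clause-(b) firings using Lemma~\ref{le:nibound} together with the non-$K$-triviality of $A$. The paper packages the last step more directly by fixing upfront the least $j$ with $K_{N_{i-1}}(A\restr_j)>K(j)+c_{i-1}$ and observing that once the approximations to $A\restr_j$ and to $K(m)$ for $m\leq j$ settle, $t_{i-1}$ freezes at $j$, so any further clause-(b) firing would enumerate an $N_{i-1}$-description of $A\restr_j$ contradicting the choice of $j$; your contrapositive route (infinitely many firings $\Rightarrow t_{i-1}\to\infty \Rightarrow A$ is $K$-trivial) is equivalent but works harder than necessary.
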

\begin{proof}
Assume that this holds for all $i\leq k$. Then
 $n_k$
stops moving after some stage $s_0$.
The construction will define $n_{k+1}$ at some later stage $s_1$. 
In the following we write $n_k$
for the the limit of $n_k[s]$ when $s\to\infty$.
Similarly, $c_k[s]$ reaches a limit $c_k:=c_k[s_0]$ at $s_0$.
Since $A$ is not $K$-trivial
 there is some least $j$ such that
$K_{N_k}(A\restr_j)>K(j)+c_k$. If $s_2>s_1$ is a stage where 
the approximations to $A\restr_j$ and $K(i)$, $i\leq j$ 
have settled then the approximations to $t_k$, $q_k$ also reach a limit
by this stage.

Let $s_3>s_2$ be a stage at which the approximation to
the membership of $n_k$ in $X_k$ has reached a limit.
If  $n_{k+1}$ moved after stage $s_3$ this would be solely due to
clause (b) of Section \ref{subse:movmauxma}. 
Hence at such a stage the construction would enumerate an 
$N_k$-description of $A\restr_j$
of length $K(j)+c_k$ which contradicts the choice of $j$.
Hence $n_{k+1}$ reaches a limit by stage $s_3$ and this 
concludes the induction step.
\end{proof} 
\noindent
We may now show that $B\leq_K A$.
\begin{lem}
The approximation to $B$ converges and \textup{(\ref{eq:negreqkb})} is met.
\end{lem}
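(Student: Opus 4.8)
The plan is to verify the two assertions in turn, relying on the finiteness of all injuries established in Lemma~\ref{le:miconv}. For the convergence of $B$, observe that $B[s]$ is altered only at positions currently occupied by a witness, and only when clause~(a) applies. By Lemma~\ref{le:miconv} each marker $n_i[s]$ reaches a limit $n_i$, and since $n_i[s]<n_{i+1}[s]$ these limits are distinct and strictly increasing. A number $n$ that is never a limit witness is occupied by a witness during only finitely many stages, so $B(n)[s]$ is eventually constant; and for $n=n_i$, once $n_i[s]$ and $X_i(n_i)[s]$ have settled, clause~(a) sets $B(n_i)$ to $1-X_i(n_i)$ and cannot apply at $n_i$ again (the two values now disagree), so $B(n_i)[s]$ is eventually constant as well. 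Hence $B=\lim_s B[s]$ exists, and in passing $B(n_i)\neq X_i(n_i)$, so $B\neq X_i$ for every $i$.

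To prove~\eqref{eq:negreqkb}, fix $k$. Since the construction eventually defines $n_{i+1}$ for every $i$ (see the proof of Lemma~\ref{le:miconv}), the limits $(n_i)$ are unbounded, so there is $i$ with $n_i>k$. I would then fix a stage $s_0$ past which: $A\restr_k$ and every value $K(A\restr_j)[\cdot]$ with $j\le k$ have stabilised; $B\restr_k$ has stabilised; the witnesses $n_0,\dots,n_i$ have reached their limits and are never injured again; and, using Lemma~\ref{le:miconv} together with Lemma~\ref{le:nibound}, none of $n_0,\dots,n_i$ ever requires attention again. Beyond $s_0$ the value $K(A\restr_j)[s]$ for $j\le k$ is final, $B\restr_j$ is fixed, and $K_M(B\restr_j)[s]$ can only decrease; consequently, once a number $j\le k$ satisfies $K_M(B\restr_j)[s]\le K(A\restr_j)[s]$ it continues to do so at all later stages.

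The heart of the matter, and the step I expect to be the real obstacle, is to show that the construction enters the ``no witness requires attention'' branch at infinitely many stages. Granting this, take such a stage $s+1>\max(s_0,k)$: the construction looks for the least $z$ bounded by $s$ and by the current value of some marker with $K_M(B\restr_z)[s]>K(A\restr_z)[s]$; since $k\le s$ and $k<n_i[s]$, if $k$ is still deficient then the least deficient $z\le k$ is found, and issuing an $M$-description of $B\restr_z$ of length $K(A\restr_z)[s]=K(A\restr_z)$ makes $z$ permanently non-deficient. As there are at most $k+1$ numbers $\le k$, after finitely many such stages none is deficient, so $K_M(B\restr_k)\le K(A\restr_k)$ for the final $M$ and $A\restr_k$, which is~\eqref{eq:negreqkb}. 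For the cofinality of the non-attention branch I would argue as follows: by Lemma~\ref{le:miconv} each witness is injured, and hence (through clauses~(a) and~(b)) requires attention, only finitely often, so for every fixed $J$ there is a stage past which no witness of index $\le J$ demands attention; moreover a freshly placed witness carries the $B$-value $1-X_i(n_i)$ of its occupant, so it triggers clause~(a) only when $X_i$ later changes at it, which happens finitely often, while the parameters $q_i,p_i$ together with the machines $N_i$ stop clause~(b) from firing once $A\restr_{t_i}$ and the relevant values of $K$ have settled. From these facts one must conclude that the index of the least witness requiring attention cannot escape to infinity, so that the $M$-servicing branch is visited cofinally often. Making this last point precise is the delicate part, and I expect it to use exactly the appeal to the non-$K$-triviality of $A$ that powers Lemma~\ref{le:miconv}.

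Finally, by Lemma~\ref{le:Mbound} the requests enumerated in $M$ have bounded total weight, so up to a fixed additive constant $M$ is a prefix-free machine realising~\eqref{eq:negreqkb}; hence $K(B\restr_k)\le K(A\restr_k)+c$ for some constant $c$ and all $k$, that is $B\leq_K A$. Together with $B\neq X_i$ for all $i$, obtained in the first paragraph, this completes the proof of the lemma and, with the earlier lemmas, of Theorem~\ref{th:unifiscom}.
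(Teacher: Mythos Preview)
Your overall approach matches the paper's: convergence of $B$ follows from the stability of each witness (Lemma~\ref{le:miconv}), and \eqref{eq:negreqkb} follows from the $M$-enumerations performed at stages where no witness requires attention. The paper's own proof of this lemma is a three-line sketch that simply asserts ``the constant enumeration of $M$-descriptions by the construction ensures that \eqref{eq:negreqkb} holds,'' so your added detail is useful, and you have correctly isolated the one nontrivial step the paper glosses over --- showing that the $M$-servicing branch is visited infinitely often.

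However, the argument you offer for that step contains a factual slip. You write that ``a freshly placed witness carries the $B$-value $1-X_i(n_i)$ of its occupant,'' but in the construction as written $B$ is updated only at $n_x$, the witness that actually required attention via clause~(a), never at the newly placed $n_{x+1}$; and in the no-attention branch the fresh $n_{k+1}$ is placed with no accompanying change to $B$ at all. A freshly placed witness therefore lands on a large number where $B$ still has its default value $0$, and nothing prevents clause~(a) from firing for it at the very next stage whenever the current approximation to $X_{x+1}$ also reads $0$ there. With this premise gone, your concluding sentence (``the index of the least witness requiring attention cannot escape to infinity'') is not supported: in principle each freshly placed witness could demand attention immediately, the least attention-requirer's index could march to infinity stage by stage, and the $M$-servicing branch would never be reached. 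A clean repair is to adjust the construction so that whenever a witness $n_i$ is placed (in either branch) one simultaneously sets $B(n_i)$ to $1-X_i(n_i)$ at the current stage; then a fresh witness cannot trigger~(a) until $X_i$ changes, and together with your observations about clause~(b) the cofinality argument goes through. You have located a genuine subtlety that the paper does not address, but the resolution you sketched does not quite match the construction and needs this small correction.
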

\begin{proof}
If $k$ is not the limit of some witness $n_i$ then $B(k)$
will only change finitely often, since witnesses are
defined monotonically and redefined to {\em large}
values. On the other hand for the limit value $n_i$ of the $i$th witness
the construction will stop changing the approximation to $B(n_i)$ once
the approximation to $X(n_i)$ stops changing. Hence the approximation
to $B$ converges.
Moreover the constant enumeration of $M$ descriptions by the construction ensures
that (\ref{eq:negreqkb}) holds for $B$.
\end{proof}

Finally, we may
conclude that $B$ is not a member of 
the given uniformly $\emptyset'$-computable
family of sets.
Given $k$ consider the limit $n_k$ of the $k$th witness
that was established in
Lemma \ref{le:miconv}.
The construction explicitly ensures
that the final value of $B(n_k)$ is $1-X(n_k)$. Hence $B\neq X_k$ for all $k$.
This concludes the verification of the construction
and the proof of Theorem \ref{th:unifiscom}.

\section{Computably enumerable splittings and Kolmogorov complexity}\label{se:splits}\label{se:cocesplse}
A computably enumerable (c.e.) splitting of a c.e.\ set $A$ is a pair of disjoint c.e.\ sets $B,C$
such that $A=B\cup C$. This notion
has been the subject of interest for many researchers in computability.
For example it plays a special role in the
study of the lattice of the c.e.\ sets under inclusion
(see \cite{Downey.Stob:90}), which is a very developed
area of computability theory (see \cite[Chapter X]{MR882921} for an overview).
Moreover the Turing degrees of c.e.\ splittings have been studied extensively
(see \cite[Chapter XI]{MR882921} for an overview). For a comprehensive survey
of c.e.\ splittings in computability theory we suggest \cite{Downey.Stob:93}.

In this section we discuss c.e.\ splittings in the context of Kolmogorov complexity.
For example, we are interested in the initial segment complexity of the members $B,C$
of the splitting given the complexity of the original set $A$. In
Section \ref{subse:inicomcesp}
we show that some of the classical theory of c.e.\ splittings can be generalized
(both in terms of results and in terms of methods) to the context of initial segment complexity.
Section \ref{subese:compceosplit} discusses analogous results when c.e.\ sets are  used as
oracles for compressing finite programs. 
Our presentation has a bias toward the 
prefix-free version of Kolmogorov complexity, but most of the results and methods that we discuss
in this section also hold for plain Kolmogorov complexity.

The results that we present regarding the structure of the $K$ degrees 
(comparing the initial segment complexity
of c.e.\ sets) 
and the structure of the $LK$ degrees (comparing the compression strength of c.e.\ oracles)
revolve around the same structural questions and often have the same answers.
However in general the two structures are very different, even in the case of c.e.\ sets.
For example, $\leq_{LK}$ is an extension of Turing reducibility but, as many results
in \cite{MRmerstcdhdtd} demonstrate, there is no direct connection between $\leq_K$ and
Turing reducibility. In particular, there is a complete c.e.\ $LK$  degree
(i.e.\ maximum amongst the c.e.\ $LK$ degrees) but
it is not known if there is a maximum c.e.\ $K$  degree. In our view this is unlikely, and a
more interesting open question is whether there exist maximal c.e.\ $K$ degrees. 

The general programme of transferring results and methods from classical
computability theory to the study of Kolmogorov complexity (as well as its limitations)
is a fascinating topic and a critical discussion of it and its relation with
arithmetical definability may be found in \cite[Section 1]{BV2010}.

\subsection{Initial segment complexity and c.e.\ splittings}\label{subse:inicomcesp}
Given a computably enumerable set $A$, we are interested in the initial segment 
complexity of the members of the various splittings of $A$.
It is not hard to see that the Kolmogorov complexity of $A\restr_n$ is
equal to the Kolmogorov complexity of the last number $<n$ that enters $A$ in 
a computable enumeration of it.
A basic result from \cite[Section 5]{Barstris} and \cite[Chapter 2]{Tom:10}
is that the analogue of the Sacks splitting theorem (e.g.\ see \cite[Theorem 3.1]{MR882921})
holds in the context of plain or prefix-free Kolmogorov complexity.
\begin{equation}\label{eq:splitcl}
\parbox{10cm}{If $A$ is c.e.\ set and 
$A >_K \emptyset$ then $A$ is the disjoint union of two 
c.e.\ sets $A_0$, $A_1$ such that 
$A_0 |_K A_1$ and $A_0,A_1 <_K A$.}
\end{equation}
In particular, if a c.e.\ set has nontrivial initial segment
complexity then it can be split into two c.e.\ sets with strictly less
initial segment complexity. 
This fact also holds for the plain complexity
$C$ in place of $K$. Note that the assumption
$A >_K \emptyset$ is stronger than $A >_C \emptyset$
which is merely another way to say that $A$ is noncomputable. 
For various combinations of (\ref{eq:splitcl})
with other splitting theorems (like the classic Sacks splitting theorem)
we refer to \cite[Section 5]{Barstris} and \cite[Chapter 2]{Tom:10}.

We wish to combine (\ref{eq:splitcl}) with cone avoidance.
First, we demonstrate
a cone avoidance argument in the $K$ degrees in isolation, in terms of \pz classes.
A tree is a computable function from strings to strings which preserves
the compatibility and incompatibility relations. A real $X$ is a path through a tree $T$
if all of its initial segments belong to the downward closure of the image of $T$
under the prefix relation. We denote the set of infinite paths through a tree $T$ by $[T]$.
\begin{thm}[Cone avoidance for $\leq_K$ and \pz classes]\label{th:cafpik}
If $A$ is $\Delta^0_2$ and $A\not\leq_K\emptyset$ then
there exists a \pz class of reals $X$ such that $X\not\leq_K\emptyset$
and $A\not\leq_K X$.
\end{thm}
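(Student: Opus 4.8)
The plan is to build the desired $\Pi^0_1$ class as $[T]$ for a computable tree $T$, by a priority construction that simultaneously diagonalizes against the possibility that any path is $\leq_K$-below $\emptyset$ (a global requirement, since $A\not\leq_K\emptyset$ already witnesses this on the whole class if we are careful) and defeats each potential Turing functional $\Phi_e$ as a candidate reduction witnessing $A\leq_K X$ for paths $X$ through $T$. More precisely, recall from the discussion after \eqref{eq:spliti3nco} that $X\leq_K\emptyset$ means $\exists c\,\forall n\,(K(X\restr_n)\le K(n)+c)$; so the class of $X$ with $X\leq_K\emptyset$ via constant $c$ is $\Pi^0_1$ uniformly in $c$, and to make $[T]$ avoid it we must ensure that for each $c$ some path has $K(X\restr_n)>K(n)+c$ for some $n$. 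Since $A\not\leq_K\emptyset$, the set $A$ itself oscillates above every such bound; the idea is to code (a sufficiently sparse image of) $A$ into the branching pattern of $T$ so that this oscillation is inherited by the paths. For the requirement $A\not\leq_K X$ I will use the $\leq_K$-version of the usual cone-avoidance machinery: it suffices to ensure that for every constant $c$ there is some $n$ with $K(A\restr_n)>K(X\restr_n)+c$ for all $X\in[T]$, which by the remark in Section \ref{subse:inicomcesp} reduces to controlling the complexity of the ``last element entering $A$ below $n$'' relative to the analogous quantity for $X$.

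First I would fix a computable approximation $A[s]$ (possible since $A$ is $\Delta^0_2$) and a computable enumeration giving $K(\sigma)[s]$. The tree $T$ is built by stages as a finite binary tree $T[s]$ with computable limit; branching nodes will be reserved for coding. At a coding node assigned to ``bit $k$ of $A$'', the left subtree commits the corresponding initial segment of every extending path to reflect $A\restr_{\ell_k}$ for a fast-growing computable length $\ell_k$, while keeping the complementary branch available until the approximation $A[s]$ changes; a change forces us to move to the other branch, which by the $\Delta^0_2$-ness happens only finitely often at each node, so $T[s]$ stabilizes. The key calculation is to choose $\ell_k$ so sparse that the overhead of recovering $A\restr_{\ell_k}$ from $X\restr_n$ (for a path $X$ and appropriate $n$) is dwarfed by the complexity swings of $A$; this is where the hypothesis $A\not\leq_K\emptyset$ is used, via an argument analogous to the proof that $K$-trivial sets are limited — we exploit that $K(A\restr_n)-K(n)\to\infty$ along a subsequence to make both $X\not\leq_K\emptyset$ and $A\not\leq_K X$ simultaneously.

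The main obstacle, as in \eqref{eq:splitcl} and the related splitting arguments of \cite{Barstris, Tom:10}, is the interaction between the weight bookkeeping for the auxiliary prefix-free machines (needed to turn ``$K(A\restr_n)$ is large'' into a usable combinatorial fact about descriptions) and the tree-surgery that responds to changes in $A[s]$: each time we re-route the tree to track $A$, previously ``charged'' descriptions may need to be re-paid on the new branch, and one must show, by a nested-requirement or decanter-style accounting similar to Lemma \ref{le:Mbound}, that the total weight remains bounded. I expect this to be resolvable by making the coding lengths $\ell_k$ increase fast enough that the $k$-th coding node contributes weight at most $2^{-k}$ to the error term, exactly as the witnesses $n_z$ contribute $2^{-z-2}$ in Lemma \ref{le:Mbound}. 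Finally, one checks that $[T]\ne\emptyset$ is perfect-free-of-trivial-paths in the weak sense asserted (just $\Pi^0_1$ and nonempty with the two stated properties), completing the argument; the stronger perfect-class refinements of \cite{Baarsbarmp} are not needed here.
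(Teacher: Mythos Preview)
Your proposal overshoots the difficulty of the problem and, in doing so, misses the simple idea that actually makes the proof work. You are proposing to \emph{code} $A$ into the branching pattern of $T$ and then run a decanter-style weight accounting analogous to Lemma~\ref{le:Mbound}. But coding $A$ into the paths pushes in exactly the wrong direction for the requirement $A\not\leq_K X$: if every path $X$ reflects $A\restr_{\ell_k}$ at prescribed locations, you have made it \emph{easier} to describe initial segments of $A$ from initial segments of $X$, not harder. Your sentence about exploiting ``$K(A\restr_n)-K(n)\to\infty$ along a subsequence'' to get both $X\not\leq_K\emptyset$ and $A\not\leq_K X$ simultaneously does not explain how the tension is resolved, and I do not see that it can be along these lines. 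Also note that the requirements here are indexed by constants $c$, not by Turing functionals $\Phi_e$; there is no reduction to defeat, only a complexity inequality to violate.

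The paper's argument is a direct Sacks-style movable-marker construction with no machine building at all. One approximates a perfect function tree $T:2^{<\omega}\to 2^{<\omega}$ and, for each $\sigma$ of length $e$, monitors the length of agreement $\ell_\sigma[s]$ for the statement $\forall n\ (K(A\restr_n)\le K(T_\sigma\restr_n)+e)$; whenever $\ell_\sigma$ exceeds $|T_\sigma|$, one slides $T_\sigma$ down to a longer existing node of the current tree. The crucial observation you are missing is this: if some $T_\sigma$ moves infinitely often, its limit is a \emph{computable} real $X$ (because the markers only move monotonically along nodes already present in $T[s]$), and then $K(A\restr_n)\le K(X\restr_n)+e$ for all $n$ forces $A$ to be $K$-trivial, contradicting the hypothesis. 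So each marker settles, the limit tree is perfect, and the requirements $R_e$ are met. The additional task of ensuring no path is $K$-trivial is handled separately by the known techniques of \cite{Kucera.Slaman:07}; no decanter accounting enters anywhere.
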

\begin{proof}
The task of avoiding $K$-trivial members when constructing a \pz class
has been extensively discussed in 
\cite{Kucera.Slaman:07, Barmpalias.Stephan:08, BV2010,  Baarsbarmp}.
In order to focus on the cone avoidance ideas we only prove a simpler
version of Theorem \ref{th:cafpik} which merely requires the \pz class to be perfect.
The combination of this construction with extra requirements guaranteeing that the
\pz class does not have $K$-trivial members is along the lines
of the argument discussed in \cite{Kucera.Slaman:07}. In particular, it is
not as simple as the case in \cite{BV2010} but it is much simpler than the argument
discussed in \cite{Barmpalias.Stephan:08} (which is in turn simpler than the one in
\cite{Baarsbarmp}).

Let $A[s]$ be a computable approximation to $A$.
We approximate a perfect \pz tree $T: 2^{<\omega}\to 2^{<\omega}$
and ensure that $A\not\leq_K X$ for all paths $X$ through $T$.
Let $T_{\sigma}$ denote the image of $\sigma$ under $T$ and let 
$T_{\sigma}[s]$ denote its approximation at (the end of) stage $s$.
We will satisfy the following requirements.
\[
R_e: \ \forall\sigma\in 2^{e}\ \exists n\leq |T_{\sigma}|\ [K(A\restr_n)>K(T_{\sigma}\restr_n)+e].
\]
We define the length of agreement $\ell_{\sigma}$ 
for each string $\sigma$ of length $e$ in order to monitor the
satisfaction of $R_e$ with respect to $T_{\sigma}$. Let $\ell_{\sigma}[s]$ be the 
largest number
$n\leq s$ such that $K(A\restr_j)[s]\leq K(X\restr_j)[s]+e$ for some extension $X$ of
$T_{\sigma}[s]$ in $[T[s]]$ and all $j<n$. We view the approximations 
to each $T_{\sigma}$ as a movable marker.
We say that $T_{\sigma}$ requires attention at stage $s+1$ if 
$\ell_{\sigma}[s]$ is larger than $|T_{\sigma}[s]|$.

At stage 0 we let $T_{\sigma}[0]=\sigma$ for all $\sigma$.
At stage $s+1$ let $\sigma$ be the least string  of length at most
$s$ that requires attention  
(if there is no such string, do nothing).
Let $T_{\sigma}[s+1]=T_{\rho}[s]$ where $\rho$ is the least extension of
$\sigma$ such that $|T_{\rho}[s]|$ is larger than $\ell_{\sigma}[s]$
and  $K(A\restr_j)[s]\leq K(T_{\rho}[s]\restr_j)[s]+e$ for all $j<|T_{\rho}[s]|$.
Moreover for each string $\eta$ 
let $T_{\sigma\ast\eta}[s+1]=T_{\rho\ast\eta}[s]$.

We start the verification of the construction by noting that the reals that
are paths through all trees $T[s]$ form a \pz class.
Moreover since the nodes $T_{\sigma}$ can only move to
existing nodes in $T[s]$ at stage $s+1$ in
a monotone fashion, it follows that $[T[s+1]]\subseteq [T[s]]$
for all stages $s$. Hence if we show that each node $T_{\sigma}$ reaches a limit
then the paths through the limit tree $T$ form a perfect \pz class.

It remains to show that $T[s]$ reaches a limit $T$ such that $R_e$ is satisfied for all $e$.
We show this by induction. Assume that by stage $s_0$ all
nodes $T_{\sigma}$ with $|\sigma|\leq e$ have reached a (finite) limit
such that $R_i$ is satisfied for each $i\leq e$.
Fix a string $\sigma$ of length $e+1$. If $T_{\sigma}$ is redefined infinitely often,
then $T_{\sigma}[s]$ converges to a computable real $X$ such that
$K(A\restr_n)\leq K(X\restr_n)+e+1$ for all $n$. This is a contradiction since $A$
is not $K$-trivial. Hence $T_{\sigma}[s]$ reaches a finite limit $T_{\sigma}$ such that
$K(A\restr_n)> K(T_{\sigma}\restr_n)+e+1$ for some $n\leq |T_{\sigma}|$.
Hence $R_{e+1}$ is satisfied. This completes 
the induction step and the proof.
\end{proof}
\noindent
The methods of construction behind (\ref{eq:splitcl})
(from \cite[Section 5]{Barstris} and \cite[Chapter 2]{Tom:10})
and Theorem \ref{th:cafpik} can be combined in a proof of the following
enhanced splitting theorem.

\begin{thm}[Splitting with cone avoidance for $\leq_K$]\label{thrm_splitk} 
Let $A$ be a c.e.\ set such that $A \not\leq_K \emptyset$
and let $X$ be a $\Delta^0_2$ set such that $X \not\leq_K \emptyset$. 
Then $A$ is the union of two disjoint c.e.\ sets $A_0, A_1$ such that 
$A_0 |_K A_1$, $A_i <_K A$ and  $X\not\leq_K A_i$ for $i=0,1$.
\end{thm}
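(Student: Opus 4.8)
The plan is to run a single priority construction that simultaneously does three things: it splits $A$ into a disjoint pair of c.e.\ sets $A_0,A_1$ (a Sacks-style splitting), it guarantees $A_i<_K A$ for $i=0,1$ via the weighted machine-building technique already used for \eqref{eq:splitcl} in \cite[Section 5]{Barstris} and \cite[Chapter 2]{Tom:10}, and it guarantees $X\not\leq_K A_i$ via the cone-avoidance requirements from Theorem \ref{th:cafpik}. Concretely I would enumerate auxiliary prefix-free machines $M_0,M_1$ witnessing $A_i\leq_K A$ (so that each $A_i$ has complexity bounded by that of $A$), enumerate further machines $N_{i,\sigma}$ or use length-of-agreement functions $\ell_{i}$ to drive the requirements $R^i_e:\ \exists n\ [K(X\restr_n)>K(A_i\restr_n)+e]$, and arrange the incomparability requirements $A_0\not\leq_T A_1$, $A_1\not\leq_T A_0$ (which in fact give $A_0|_K A_1$ once combined with the complexity bounds, exactly as in \eqref{eq:splitcl}) by the usual Sacks preservation strategy. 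Each element $a$ that enters $A$ at stage $s+1$ is put into $A_0$ or $A_1$ according to which side is chosen by the highest-priority requirement that wants to restrain the opposite side; the key point is that each number is assigned to exactly one of $A_0,A_1$, so $A_0\cap A_1=\emptyset$ and $A_0\cup A_1=A$ is automatic.

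The organization I would use is the standard finite-injury priority list interleaving the splitting/preservation requirements with the cone-avoidance requirements $R^0_e,R^1_e$. The cone-avoidance strategy for $R^i_e$ is exactly the one in the proof of Theorem \ref{th:cafpik}: maintain a length of agreement $\ell^i_e[s]$ measuring how far $K(X\restr_n)\leq K(A_i\restr_n)+e$ persists, and act (here "acting" means choosing to route future $A$-elements below the current use into $A_{1-i}$ rather than $A_i$, and issuing a short description of $A_i\restr_{\ell}$ into a machine $N_{i,e}$) whenever the length of agreement exceeds the current marker. If $R^i_e$ is injured only finitely often and still fails, then $A_i\restr_n$ gets a description of length $\leq K(X\restr_n)+e+O(1)$ for all $n$, forcing $K(X\restr_n)\leq K(\emptyset\restr_n)+O(1)$-type triviality of $X$ — wait, more precisely it forces $X\leq_K A_i\leq_K A$, but since the side $A_i$ below the relevant use is kept computable-in-the-limit with bounded complexity by the preservation, one derives that $X$ is $K$-trivial, contradicting $X\not\leq_K\emptyset$. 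The bound on the weight of each $N_{i,e}$ comes, as in Theorem \ref{th:cafpik}, from the fact that a genuine new advance in $\ell^i_e$ costs weight at most the $U$-weight of the corresponding strings, and these advances are monotone between injuries.

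The verification then proceeds by the usual induction down the priority list: (i) each requirement acts only finitely often, because higher-priority requirements stabilize and a requirement that acts infinitely often produces the $K$-triviality contradiction just sketched; (ii) consequently every use/restraint reaches a limit and the markers for $M_0,M_1$ stabilize, so the request sets for $M_0,M_1$ are bounded by the same decanter-style weight accounting as in \eqref{eq:splitcl} (each $U$-description of $A\restr_n$ is reused only boundedly often because the splitting routes elements away after each injury), giving $A_i\leq_K A$; (iii) the preservation requirements succeed, giving $A_0|_T A_1$ and hence, together with the complexity bounds and the standard argument of \eqref{eq:splitcl}, $A_0|_K A_1$ and $A_i<_K A$; and (iv) each $R^i_e$ is eventually satisfied permanently. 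The main obstacle — and the only genuinely new point beyond mechanically merging the two known constructions — is checking that the weight bookkeeping for $M_0$ and $M_1$ is not spoiled by the extra injuries coming from the cone-avoidance requirements: one must verify that the total number of times a given $U$-description of $A\restr_n$ can be "replayed" into $M_i$ stays bounded by a constant depending only on the priority of the requirements below it, which is exactly the kind of nested-decanter estimate carried out in Lemma \ref{le:Mbound}, adapted so that each cone-avoidance action contributes a geometrically decreasing amount of weight.
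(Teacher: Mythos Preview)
Your proposal is both overcomplicated and contains a genuine gap. The gap: you assert that establishing Turing incomparability $A_0|_T A_1$ will yield $A_0|_K A_1$ ``once combined with the complexity bounds, exactly as in \eqref{eq:splitcl}.'' This is false in general---there exist Turing-incomparable $K$-trivial c.e.\ sets---and nothing in \eqref{eq:splitcl} or its proof derives $K$-incomparability from Turing incomparability. The paper instead attacks $K$-incomparability directly via requirements $R_{\langle e,i\rangle}:\exists n\ [K(A_{1-i}\restr_n)>K(A_i\restr_n)+e]$, using exactly the same length-of-agreement mechanism that you (correctly) describe for the cone-avoidance requirements $N_{\langle e,i\rangle}:\exists n\ [K(X\restr_n)>K(A_i\restr_n)+e]$.

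The overcomplication is that the paper builds \emph{no} prefix-free machines and does \emph{no} weight accounting. The reduction $A_i\leq_K A$ is automatic for any c.e.\ splitting (from the string $A\restr_n$ one effectively recovers the stage at which it settles, and hence $A_0\restr_n$ and $A_1\restr_n$), so your machines $M_0,M_1$ are unnecessary, and with them the entire decanter analysis you worry about at the end. The actual construction is a pure Sacks restraint argument: set $r(e,i)[s]=\max_{t\leq s}\{l(e,i)[t],m(e,i)[t],e\}$, combining the two lengths of agreement, and when $x$ enters $A$ put it into $A_{1-i}$ for the least $\langle e,i\rangle$ with $x\leq r(e,i)[s]$. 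If either $R_{\langle e,i\rangle}$ or $N_{\langle e,i\rangle}$ fails, its restraint tends to infinity; since $r(e,i)$ is nondecreasing this makes $A_i$ computable, hence $K$-trivial, and one immediately obtains the contradiction (either $A$ is $K$-trivial via closure of $K$-triviality under $\oplus$, or $X$ is $K$-trivial). The strict inequality $A_i<_K A$ then falls out of \cite[Lemma 5.1]{Barstris}. The decanter machinery of Lemma~\ref{le:Mbound} belongs to the proof of Theorem~\ref{th:unifiscom}, where one genuinely must bound the weight of an explicit machine; it plays no role here.
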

\begin{proof}
In the course of enumerating the elements of $A$ into $A_0$ and $A_1$ we
satisfy the following requirement for $e\in\Nat$ and $i=0,1$.
\[
R_{\langle e,i \rangle}: \exists n \ \big[K(A_{1-i} \upharpoonright_n) > K(A_{i} \upharpoonright_n) + e\big].
\]
\noindent
Thus we ensure that $A_0\not\leq_K A_{1}$ and $A_1\not\leq_K A_{0}$. 
By \cite[Lemma 5.1]{Barstris} we also get $A_0,A_1 <_K A$. 
Define the  {\em length of agreement} $l(e,i)[s]$ of $R_{\langle e,i\rangle}$ at stage $s$ to
be the largest $n\leq s$ such that 
$\forall j<n\ (K(A_{1-i} \upharpoonright_j)[s] \leq K(A_i \upharpoonright_j)[s]+e)$.
We also need to meet the following requirement for all $e\in\Nat$ and $i=0,1$.
\[
N_{\langle e,i\rangle}: \ \exists n \ [K(X\restr_n)>K(A_i\restr_n)+e].
\]
We define the length of agreement $m(e, i)$ 
in order to monitor the satisfaction of $R_{\langle e,i\rangle}$. 
Let $m(e, i)[s]$ be the 
largest number $n\leq s$ with the property that for all $j<n$,   
$K(X\restr_j)[s]\leq K(A_i\restr_j)[s]+e$.
Let the restraint imposed by 
$R_{\langle e,i\rangle}, N_{\langle e,i\rangle}$ 
on the enumeration of $A_i$ 
at stage $s+1$ be given by
\[
r(e,i)[s] = \max_{t \leq s}\{l(e,i)[t], m(e,i)[t], e\}.
\]
\noindent
Note that by definition the restraint is non-decreasing in the stages $s$. 
Let $A_i[0]=\emptyset$ for $i=0,1$ and without loss of generality assume that at each stage
exactly one element is enumerated in $A$.

\ \paragraph{{\em Construction}}
If $x \in A[s+1]-A[s]$ consider the least $\langle e,i \rangle$ such that $x \leq r(e,i)[s]$ and
enumerate $x$ into $A_{1-i}$. 

\ \paragraph{{\em Verification}}
By induction we show that 
for all $\langle e,i\rangle$ requirements $R_{\langle e,i\rangle}, N_{\langle e,i\rangle}$
are met and $r(e,i)$ reaches a limit.
Suppose that there is a stage $s_0$ such that for all $\langle e',i' \rangle < \langle e,i \rangle$ 
the requirements $R_{\langle e',i' \rangle}, N_{\langle e',i' \rangle}$ 
are met  and $r(e',i')[s]$ remains constant for all $s\geq s_0$.
Without loss of generality we may assume that $s_0$ is large enough so that all numbers enumerated in $A$
after $s_0$ are larger than the final values of $r(e',i')$, $\langle e',i' \rangle < \langle e,i \rangle$.
By the choice of $s_0$, after that stage all numbers enumerated 
into $A_i$ will be larger than the current value of $r(e,i)$. 

For a contradiction, suppose that 
$R_{\langle e,i \rangle}$ is not met. Then the length of agreement 
$l(e,i)$ and the restraint $r(e,i)$ tend to infinity.
Since $r(e,i)[s]$ is nondecreasing in $s$ it follows that
$A_i$ is computable; hence $K$-trivial. 
Since $R_{\langle e,i \rangle}$ is not met, it follows that $A_{1-i}$ is $K$-trivial.
Since $A$ is the disjoint union of $A_0$ and $A_1$ we have $A\equiv_T A_0\oplus A_1$. 
Then $A$ is $K$-trivial, given that $K$-triviality is closed under the join operator.
This contradicts the assumption about $A$. Hence $R_{\langle e,i \rangle}$ is met.

For a second contradiction, assume that
$N_{\langle e,i \rangle}$ is not met. Then the length of agreement 
$m(e,i)$ and the restraint $r(e,i)$ tend to infinity.
Since $r(e,i)[s]$ is nondecreasing in $s$ it follows that
$A_i$ is computable, hence $K$-trivial. 
Since $N_{\langle e,i \rangle}$ is not met, it follows that $X$ is $K$-trivial.
This contradicts our assumption about $X$. Hence $N_{\langle e,i \rangle}$ is met.

To conclude the induction step (and the proof) it suffices to show that $r(e,i)[s]$ reaches a limit
as $s$ tends to infinity. But this is a direct consequence of its definition and the fact that
$R_{\langle e,i \rangle}, N_{\langle e,i \rangle}$ are met. 
\end{proof}
\noindent
The proof of Theorem \ref{thrm_splitk} can be written for $\leq_C$ instead 
of $\leq_K$ with no essential changes. This trivial modification gives the following
analogue.
\begin{thm}[Splitting with cone avoidance for $\leq_C$]\label{thrm_splitc} 
Let $A$ be a c.e.\ set such that $A \not\leq_C \emptyset$
and let $X$ be a $\Delta^0_2$ set such that $X \not\leq_C \emptyset$. 
Then $A$ is the union of two c.e.\ sets $A_0, A_1$ such that $A_0\cap A_1=\emptyset$,
$A_0 |_C A_1$ and $A_i <_C A$, $X\not\leq_C A_i$ for $i=0,1$.
\end{thm}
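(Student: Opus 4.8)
The plan is to repeat the proof of Theorem~\ref{thrm_splitk} essentially verbatim, replacing $K$ by $C$ throughout, so I will only indicate the (minimal) adjustments. For $e\in\Nat$ and $i=0,1$ I would set up the requirements
\[
R_{\langle e,i\rangle}:\ \exists n\ \big[C(A_{1-i}\restr_n)>C(A_i\restr_n)+e\big],\qquad
N_{\langle e,i\rangle}:\ \exists n\ \big[C(X\restr_n)>C(A_i\restr_n)+e\big],
\]
define the length-of-agreement functions $l(e,i)[s]$ and $m(e,i)[s]$ exactly as in Theorem~\ref{thrm_splitk} but in terms of the stage-$s$ approximations to plain complexity and to the $\Delta^0_2$ set $X$ (whose initial segments still reach a limit), put $r(e,i)[s]=\max_{t\leq s}\{l(e,i)[t],m(e,i)[t],e\}$, and keep the enumeration rule: when $x$ enters $A$ at stage $s+1$, enumerate $x$ into $A_{1-i}$ for the least $\langle e,i\rangle$ with $x\leq r(e,i)[s]$. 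This yields a c.e.\ splitting of $A$, and the plain-complexity analogue of \cite[Lemma~5.1]{Barstris} gives $A_0,A_1\leq_C A$.

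The verification is the same induction on $\langle e,i\rangle$. Past a stage $s_0$ at which all higher-priority restraints are constant and large, failure of a requirement forces its restraint, hence $l(e,i)$ or $m(e,i)$, to tend to infinity; since $r(e,i)[s]$ is nondecreasing this freezes $A_i$ below every bound, so $A_i$ is computable. Here the one place where the $\leq_K$ argument appealed to a nontrivial closure property of $K$-triviality becomes simpler: a computable set $\tau$-initial-segment satisfies $C(\tau)\leq C(|\tau|)+O(1)$, so $A_i$ is $C$-trivial; if $R_{\langle e,i\rangle}$ fails then $C(A_{1-i}\restr_n)\leq C(A_i\restr_n)+e$ for all $n$, so $A_{1-i}$ is $C$-trivial and hence computable by Chaitin's theorem (the plain-complexity statement recalled in Section~\ref{se:iscces}); but then $A=A_0\cup A_1$ is computable, contradicting $A\not\leq_C\emptyset$. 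Similarly, if $N_{\langle e,i\rangle}$ fails then $A_i$ is computable, hence $C$-trivial, whence $C(X\restr_n)\leq C(A_i\restr_n)+e\leq C(n)+O(1)$, making $X$ $C$-trivial and contradicting $X\not\leq_C\emptyset$. Once $R_{\langle e,i\rangle}$ and $N_{\langle e,i\rangle}$ are met, $r(e,i)$ reaches a limit exactly as in the $\leq_K$ case, closing the induction step.

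The conclusions are then read off as in Theorem~\ref{thrm_splitk}: the family $\{R_{\langle e,i\rangle}\}$ gives $A_0\not\leq_C A_1$ and $A_1\not\leq_C A_0$, so $A_0\,|_C\,A_1$; combining $A_1\leq_C A$ with a hypothetical $A\leq_C A_0$ would give $A_1\leq_C A_0$, a contradiction, so $A_0<_C A$ and symmetrically $A_1<_C A$; and $\{N_{\langle e,i\rangle}\}$ gives $X\not\leq_C A_i$ for $i=0,1$. I do not expect a genuine obstacle in this argument; the only point that needs a second look is that the ``contradiction with $A\not\leq_C\emptyset$'' step still works, and in fact it works more directly than for $K$, because $C$-triviality coincides with computability, which is trivially preserved by taking the disjoint union of $A_0$ and $A_1$.
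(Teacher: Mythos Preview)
Your proposal is correct and is exactly what the paper does: it states that the proof of Theorem~\ref{thrm_splitk} ``can be written for $\leq_C$ instead of $\leq_K$ with no essential changes'' and gives no separate argument. Your observation that the closure of $K$-triviality under $\oplus$ is replaced by the trivial closure of computability under disjoint union (via Chaitin's characterization of $C$-triviality) is the right way to handle the one place where the translation is not purely syntactic.
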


The above results establish  the existence of c.e.\ splittings of
strictly lesser complexity. We wished to know if it is always possible to split
a c.e.\ set into two parts of the same initial segment complexity.
It is not hard to see that there exist specially crafted 
c.e.\ sets that can be split into two c.e.\ sets of the same complexity.
For example, given any c.e.\ set $A$ the set $A\oplus A$ has this property.
In \cite{cek2012} it was shown that every c.e.\ set $A$ is the union of two  
c.e.\ sets $B,C$ such that $B\cap C=\emptyset$
and $A\equiv_K B\equiv_K \equiv C$; also it was shown that this result is not true
in the case of plain initial segment complexity.
The same question has been studied in the context of Turing degrees.
Lachlan \cite{Lachlan:67} showed that there exists a c.e.\ set that cannot be split into
two c.e.\ sets of the same Turing degree. The c.e.\ sets which can be split
into two c.e.\ sets of the same Turing degree are called mitotic and were
studied in \cite{Ladner:73, Ladner:73*1} and \cite{Downey.Slaman:89}.

A related topic of interest concerns the relationship of $\leq_K, \leq_C$ with $\leq_T$
in the context of c.e.\ sets.
This was studied in \cite{MRmerstcdhdtd} more generally, but the arguments used
there provide facts about the c.e.\ case as well. For example, 
the $K$-degree of the halting set does not contain all the c.e.\ members of any
c.e.\ Turing degree. Moreover the same holds for the $C$-degree of the halting set.
Further results 
(along the lines of the theorems in \cite{MRmerstcdhdtd}) 
relating $\leq_K, \leq_C$ with $\leq_T$ in the case of the c.e.\ sets 
 may be obtained by a careful examination of the methods 
in \cite{MRmerstcdhdtd}.

\subsection{Compression with c.e.\ oracles and splittings}\label{subese:compceosplit}
We have already stressed that the $LK$ 
reducibility (measuring the compressing power of oracles) is quite different to $K$
reducibility (measuring the initial segment complexity of reals) even in the context of 
c.e.\ sets. 
However the two measures are quite related on the random sequences. In fact,
it was shown in \cite{milleryutran} that if $X,Y$ are random sequences
then $X\leq_{K} Y$ implies $Y\leq_{LK} X$. 
In other words, a random sequence $X$ can compress finite programs at least
as efficiently as another random sequence $Y$, provided that its initial segments
are at most as complex as those of $Y$.

A similarity between the two measures also occurs in the case of c.e.\ oracles, 
with respect to the 
various splitting properties that where discussed in Section \ref{subse:inicomcesp}.
Such properties of $LK$ have mostly been studied in terms of the related reducibility
$\leq_{LR}$. As we discussed in Section \ref{subse:relcompo} it coincides with 
$\leq_{LK}$. Morever it is rather straightforward to translate an argument concerning $\leq_{LR}$
to the analogous argument about $\leq_{LK}$, by replacing effectively open sets with
prefix-free machines.
The argument in \cite{omunca} may be useful as
a guide for such a translation.
In the following we only use $\leq_{LK}$, although most of the original proofs
in  the literature of the results that we discuss refer to $\leq_{LR}$.
We start with the following analogue of (\ref{eq:splitcl}) that was proved in
\cite{Barmpalias.ea:08} (also see \cite[Footnote 7]{BarmpaliasCompress} and \cite[Chapter 2]{Tom:10}).

\begin{equation}\label{eq:splitLKl}
\parbox{10cm}{If $A$ is c.e.\ set and 
$A >_{LK} \emptyset$ then $A$ is the disjoint union of two 
c.e.\ sets $A_0$, $A_1$ such that 
$A_0 |_{LK} A_1$ and $A_0,A_1 <_{LK} A$.}
\end{equation}
Cone avoidance works for $\leq_{LK}$ much in the same way 
as it does for $\leq_K$.
\begin{equation}\label{eq:splitmoLKl}
\parbox{10cm}{Theorems \ref{th:cafpik} and \ref{thrm_splitk} 
 hold for $\leq_{LK}$ in place of $\leq_K$.}
\end{equation}
The proof of (\ref{eq:splitmoLKl})
does not involve any new ideas, other than the ones presented in 
\cite{Barmpalias.ea:08} and in Section \ref{subse:inicomcesp} of the present article. 
For this reason it is left to the motivated 
reader. Similar cone avoidance arguments have been used in \cite{Morph11}.

The study of c.e.\ sets that cannot be split in the same $LK$ degree was the topic of
\cite[Section 3]{BarmpaliasM09}. 
\begin{equation}\label{eq:spkakaLKl}
\parbox{10cm}{There exists a c.e.\ set $A$ which cannot be split into
two c.e.\ sets $B,C$ such that $A\equiv_{LK} B \equiv_{LK} C$.}
\end{equation}
Moreover the set $A$ that was constructed
was shown to be Turing complete. Recall that the analogue of
(\ref{eq:spkakaLKl}) for $\leq_K$ is an open question.
The following characterization of triviality with respect to $\leq_{LK}$ 
was another result from \cite[Section 3]{BarmpaliasM09}.
\begin{equation}\label{eq:spmorsaLKl}
\parbox{10cm}{A c.e.\ set is $K$-trivial if and only if
it computes a set which cannot be split into two c.e.\ sets of the same
$LK$ degree.}
\end{equation}
The analogue of 
(\ref{eq:spmorsaLKl}) for the case of Turing degrees (i.e.\ that every noncomputable
c.e.\ set computes a non-mitotic set) was shown in \cite{Ladner:73}.
An interesting open question on this topic is the existence
of c.e.\ $LK$ degrees in which all c.e.\ sets can be split into two c.e.\ sets of the same $LK$ degree. 
The analogue of this
question for the Turing degrees was answered positively in
\cite{Ladner:73*1} and was further studied in \cite{Downey.Slaman:89}.

%\bibliographystyle{alpha}
%\bibliography{barcap}

\begin{thebibliography}{ASDFM13}

\bibitem[ASDFM13]{Merklebarmp}
Klaus Ambos-Spies, Decheng Ding, Yun Fan, and Wolfgang Merkle.
\newblock Maximal pairs of computably enumerable sets in the computably
  {L}ipschitz degrees.
\newblock {\em Theory Comput. Syst.}, 52(1):2--27, 2013.

\bibitem[Bar68]{BarzdinsCe}
Janis Barzdins.
\newblock Complexity of programs to determine whether natural numbers not
  greater than $n$ belong to a recursively enumerable set.
\newblock {\em Soviet Mathematics Doklady}, 9:1251--1254, 1968.

\bibitem[Bar05]{DBLP:conf/cie/Barmpalias05}
George Barmpalias.
\newblock Computably enumerable sets in the {S}olovay and the strong weak truth
  table degrees.
\newblock In S.~Barry Cooper, Benedikt L{\"o}we, and Leen Torenvliet, editors,
  {\em CiE}, volume 3526 of {\em Lecture Notes in Computer Science}, pages
  8--17. Springer, 2005.

\bibitem[Bar10a]{BarmpaliasCompress}
George Barmpalias.
\newblock Elementary differences between the degrees of unsolvability and the
  degrees of compressibility.
\newblock {\em Ann. Pure Appl. Logic}, 161(7):923--934, 2010.

\bibitem[Bar10b]{Barmpalias:08}
George Barmpalias.
\newblock Relative randomness and cardinality.
\newblock {\em Notre Dame J. Formal Logic}, 51(2), 2010.

\bibitem[Bar11]{Barstris}
George Barmpalias.
\newblock On strings with trivial {K}olmogorov complexity.
\newblock {\em Int J Software Informatics}, 5(4):609--623, 2011.

\bibitem[Bar12]{Bacompa}
George Barmpalias.
\newblock Compactness arguments with effectively closed sets for the study of
  relative randomness.
\newblock {\em J. Logic Computation}, 22(4):679--691, 2012.

\bibitem[Bar13a]{BSLsurvK}
George Barmpalias.
\newblock Algorithmic randomness and measures of complexity.
\newblock {\em Bulletin of Symbolic Logic}, 2013.
\newblock In press.

\bibitem[Bar13b]{Baunilow}
George Barmpalias.
\newblock Universal computably enumerable sets and initial segment prefix-free
  complexity.
\newblock {\em Information and Computation}, 2013.
\newblock In press.

\bibitem[BB13]{Baarsbarmp}
Martijn Baartse and George Barmpalias.
\newblock On the gap between trivial and nontrivial initial segment prefix-free
  complexity.
\newblock {\em Theory of Computing Systems}, 52:28--47, 2013.

\bibitem[BHLM13]{cek2012}
George Barmpalias, Rupert H\"{o}lzl, Andrew~E.M. Lewis, and Wolfgang Merkle.
\newblock Analogues of {C}haitin's {$\Omega$} in the computably enumerable
  sets.
\newblock {\em Information Processing Letters}, 113(5-6):171--178, 2013.

\bibitem[BL06]{BLibT}
George Barmpalias and Andrew E.~M. Lewis.
\newblock The ib{T} degrees of computably enumerable sets are not dense.
\newblock {\em Ann. Pure Appl. Logic}, 141(1-2):51--60, 2006.

\bibitem[BL11]{omunca}
George Barmpalias and Andrew~E.M. Lewis.
\newblock Chaitin's halting probability and the compression of strings using
  oracles.
\newblock {\em Proceedings of the Royal Society A}, 467:2912--2926, 2011.

\bibitem[BLS08a]{Barmpalias.ea:08}
George Barmpalias, Andrew E.~M. Lewis, and Mariya Soskova.
\newblock {R}andomness, {L}owness and {D}egrees.
\newblock {\em J. of Symbolic Logic}, 73(2):559--577, 2008.

\bibitem[BLS08b]{Barmpalias.Stephan:08}
George Barmpalias, Andrew E.~M. Lewis, and Frank Stephan.
\newblock {$\Pi\sp 0\sb 1$} classes, {LR} degrees and {T}uring degrees.
\newblock {\em Ann. Pure Appl. Logic}, 156(1):21--38, 2008.

\bibitem[BM09]{BarmpaliasM09}
George Barmpalias and Anthony Morphett.
\newblock Non-cupping, measure and computably enumerable splittings.
\newblock {\em Mathematical Structures in Computer Science}, 19(1):25--43,
  2009.

\bibitem[BV11]{BV2010}
George Barmpalias and C.S. Vlek.
\newblock Kolmogorov complexity of initial segments of sequences and
  arithmetical definability.
\newblock {\em Theoretical Computer Science}, 412(41):5656--5667, 2011.

\bibitem[Cha75]{MR0411829}
Gregory~J. Chaitin.
\newblock A theory of program size formally identical to information theory.
\newblock {\em J. Assoc. Comput. Mach.}, 22:329--340, 1975.

\bibitem[Cha76a]{Chaitin:76}
G.~Chaitin.
\newblock Information-theoretical characterizations of recursive infinite
  strings.
\newblock {\em Theoretical Computer Science}, 2:45--48, 1976.

\bibitem[Cha76b]{ChaitinEntropy}
Gregory~J. Chaitin.
\newblock Algorithmic entropy of sets.
\newblock {\em Computers \& Mathematics with Applications}, 2:233--245, 1976.

\bibitem[Cha77]{Chaitin77ibm}
Gregory~J. Chaitin.
\newblock Algorithmic information theory.
\newblock {\em IBM Journal of Research and Development}, 21(4):350--359, 1977.

\bibitem[CZ10]{CaludeZ10}
Cristian~S. Calude and Marius Zimand.
\newblock Algorithmically independent sequences.
\newblock {\em Inf. Comput.}, 208(3):292--308, 2010.

\bibitem[Day10]{apal/Day10}
Adam~R. Day.
\newblock The computable {L}ipschitz degrees of computably enumerable sets are
  not dense.
\newblock {\em Ann. Pure Appl. Logic}, 161(12):1588--1602, 2010.

\bibitem[DH10]{rodenisbook}
Rod Downey and Denis Hirshfeldt.
\newblock {\em Algorithmic Randomness and Complexity}.
\newblock Springer, 2010.

\bibitem[DHL04]{MR2030512}
Rod~G. Downey, Denis~R. Hirschfeldt, and Geoff LaForte.
\newblock Randomness and reducibility.
\newblock {\em J. Comput. System Sci.}, 68(1):96--114, 2004.

\bibitem[DHNS03]{MRtrivrealsH}
Rod~G. Downey, Denis~R. Hirschfeldt, Andr\'{e} Nies, and Frank Stephan.
\newblock Trivial reals.
\newblock In {\em {P}roceedings of the 7th and 8th {A}sian {L}ogic
  {C}onferences}, pages 103--131, {S}ingapore, 2003. {S}ingapore {U}niv.
  {P}ress.

\bibitem[Dia12]{DiamondstoneLR}
David Diamondstone.
\newblock Low upper bounds in the {LR} degrees.
\newblock {\em Ann. Pure Appl. Logic}, 163(3):314--320, 2012.

\bibitem[dLMSS55]{deleeuw1955}
K.~de~Leeuw, E.~F. Moore, C.~E. Shannon, and N.~Shapiro.
\newblock {Computability by probabilistic machines}.
\newblock In C.~E. Shannon and J.~McCarthy, editors, {\em Automata Studies},
  pages 183--212. Princeton University Press, Princeton, NJ, 1955.

\bibitem[DS89]{Downey.Slaman:89}
R.~G. Downey and Theodore~A. Slaman.
\newblock Completely mitotic r.e.\ degrees.
\newblock {\em Ann. Pure Appl. Logic}, 41:119--152, 1989.

\bibitem[DS90]{Downey.Stob:90}
R.~G. Downey and M.~Stob.
\newblock Automorphisms and splittings of recursively enumerable sets.
\newblock In {\em Proceedings of the Fourth Asian Logic Conference}, pages
  75--87. CSSK Centre, 1990.

\bibitem[DS93]{Downey.Stob:93}
Rod Downey and Michael Stob.
\newblock Splitting theorems in recursion theory.
\newblock {\em Ann. Pure Appl. Logic}, 65(1):106 pp, 1993.

\bibitem[HKM09]{holzlMerkKra09}
Rupert H\"{o}lzl, Thorsten Kr\"{a}ling, and Wolfgang Merkle.
\newblock Time-bounded kolmogorov complexity and {S}olovay functions.
\newblock In {\em Mathematical foundations of computer science, 2009}, volume
  5734 of {\em Lecture Notes in Comput. Sci.}, pages 392--402. Springer, 2009.

\bibitem[HT08]{HirschTerwLcomp}
D.~R. Hirschfeldt and S.~A. Terwijn.
\newblock Limit computability and constructive measure.
\newblock In {\em Computational Prospects of Infinity, Part II}, volume~15 of
  {\em Lecture Notes Series}, pages 131--141, National University of Singapore,
  2008. Institute of Mathematical Sciences.

\bibitem[KHMS12]{millerdomi}
Bj{\o}rn Kjos-Hanssen, Joseph~S. Miller, and Reed Solomon.
\newblock Lowness notions, measure and domination.
\newblock {\em J. Lond. Math. Soc.}, 85(3):869--888, 2012.

\bibitem[Kol65]{MR0184801}
A.~N. Kolmogorov.
\newblock Three approaches to the definition of the concept ``quantity of
  information''.
\newblock {\em Problemy Pereda\v ci Informacii}, 1(vyp. 1):3--11, 1965.

\bibitem[KS07]{Kucera.Slaman:07}
A.~Ku{\v{c}}era and T.~Slaman.
\newblock Turing incomparability in {S}cott sets.
\newblock {\em Proc. Amer. Math. Soc.}, 135(11):3723--3731, 2007.

\bibitem[Kum96]{DBLP:journals/siamcomp/Kummer96}
Martin Kummer.
\newblock Kolmogorov complexity and instance complexity of recursively
  enumerable sets.
\newblock {\em SIAM J. Comput.}, 25(6):1123--1143, 1996.

\bibitem[Lac67]{Lachlan:67}
Alistair~H. Lachlan.
\newblock The priority method {I}.
\newblock {\em Z. Math. Logik Grundlag. Math.}, 13:1--10, 1967.

\bibitem[Lad73a]{Ladner:73*1}
Richard Ladner.
\newblock A complete mitotic recursively enumerable degree.
\newblock {\em Trans. Amer. Math. Soc.}, 184:497--507, 1973.

\bibitem[Lad73b]{Ladner:73}
Richard Ladner.
\newblock Mitotic recursively enumerable sets.
\newblock {\em J. Symbolic Logic}, 38:199--211, 1973.

\bibitem[Lev73]{MR0366096}
L.~A. Levin.
\newblock The concept of a random sequence.
\newblock {\em Dokl. Akad. Nauk SSSR}, 212:548--550, 1973.

\bibitem[Lev74]{Levin84}
Leonid~A. Levin.
\newblock Laws of information conservation (nongrowth) and aspects of the
  foundation of probability theory.
\newblock {\em Problems of Information Transmission}, 10(1):206--210, 1974.

\bibitem[Lev84]{Levinsdf84}
Leonid~A. Levin.
\newblock Randomness conservation inequalities; information and independence in
  mathematical theories.
\newblock {\em Information and Control}, 61(1):15--37, 1984.

\bibitem[Lev02]{DBLP:conf/focs/Levin02}
Leonid~A. Levin.
\newblock Forbidden information.
\newblock In {\em 43rd Symposium on Foundations of Computer Science (FOCS
  2002), 16-19 November 2002, Vancouver, BC, Canada, Proceedings.}, pages
  761--771. IEEE Computer Society, 2002.

\bibitem[Lov69]{Loveland69}
Donald~W. Loveland.
\newblock A variant of the {K}olmogorov concept of complexity.
\newblock {\em Information and Control}, 15(6):510--526, 1969.

\bibitem[LV08]{MR1438307}
Ming Li and Paul Vit{\'a}nyi.
\newblock {\em An introduction to {K}olmogorov complexity and its
  applications}.
\newblock Graduate Texts in Computer Science. Springer-Verlag, New York, third
  edition, 2008.

\bibitem[Mil10]{joelowklowwea}
Joseph~S. Miller.
\newblock The {$K$}-degrees, low for {$K$} degrees, and weakly low for {$K$}
  sets.
\newblock {\em Notre Dame J. Formal Logic}, 50(4):381Ð391, 2010.

\bibitem[ML66]{MR0223179}
Per Martin-L{\"o}f.
\newblock The definition of random sequences.
\newblock {\em Information and Control}, 9:602--619, 1966.

\bibitem[MN06]{MR2248590}
Joseph~S. Miller and Andr{\'e} Nies.
\newblock Randomness and computability: open questions.
\newblock {\em Bull. Symbolic Logic}, 12(3):390--410, 2006.

\bibitem[Mor12]{Morph11}
Anthony Morphett.
\newblock Incompleteness, approximation and relative randomness.
\newblock {\em Computability}, 1(2):109--130, 2012.

\bibitem[MS07]{MRmerstcdhdtd}
Wolfgang Merkle and Frank Stephan.
\newblock On {C}-degrees, {H}-degrees and {T}-degrees.
\newblock In {\em {T}wenty-{S}econd {A}nnual {IEEE} Conference on Computational
  Complexity ({CCC} 2007), {S}an {D}iego, {USA}, 12--16 June 2007}, pages
  60--69, Los Alamitos, CA, USA, 2007. IEEE Computer Society.

\bibitem[MY08]{milleryutran}
Joseph~S. Miller and Liang Yu.
\newblock {O}n initial segment complexity and degrees of randomness.
\newblock {\em Trans. Amer. Math. Soc.}, 360(6):3193--3210, 2008.

\bibitem[MY10]{milleryutran2}
Joseph~S. Miller and Liang Yu.
\newblock Oscillation in the initial segment complexity of random reals.
\newblock {\em Adv. Math.}, 226(6):4816--4840, 2010.

\bibitem[Nie05]{MR2166184}
Andr{\'e} Nies.
\newblock Lowness properties and randomness.
\newblock {\em Adv. Math.}, 197(1):274--305, 2005.

\bibitem[Sac63]{Sacks:63*3}
G.~E. Sacks.
\newblock On the degrees less than $\mbox{\bfseries 0}'$.
\newblock {\em Ann. of Math. (2)}, 77:211--231, 1963.

\bibitem[Soa87]{MR882921}
Robert~I. Soare.
\newblock {\em Recursively enumerable sets and degrees}.
\newblock Perspectives in Mathematical Logic. Springer-Verlag, Berlin, 1987.
\newblock A study of computable functions and computably generated sets.

\bibitem[Sol75]{Solovay:75}
R.~Solovay.
\newblock Handwritten manuscript related to {C}haitin's work.
\newblock IBM Thomas J. Watson Research Center, Yorktown Heights, NY, 215
  pages, 1975.

\bibitem[Sol77]{SolovProcEntro}
R.~M. Solovay.
\newblock On random r.e. sets.
\newblock In {\em In A. I. Arruda, N. C. A. da Costa, and R. Chuaqui, editors,
  Non-Classical Logics, Model Theory and Computability. Proceedings of the
  Third Latin-American Symposium on Mathematical Logic, Campinas, July 11Ð17,
  1976}, volume~89 of {\em Studies in Logic and the Foundations of
  Mathematics}, pages 283--307, Amsterdam, 1977. North Holland.

\bibitem[Ste11]{Tom:10}
Tom Sterkenburg.
\newblock {\em Sequences with trivial initial segment complexity}.
\newblock M{S}c {D}issertation, University of Amsterdam, February 2011.

\bibitem[Ver07]{ipl/Vereshchagin07}
Nikolai~K. Vereshchagin.
\newblock Kolmogorov complexity of enumerating finite sets.
\newblock {\em Inf. Process. Lett.}, 103(1):34--39, 2007.

\bibitem[Yu10]{yulrchine}
Liang Yu.
\newblock There is a perfect chain in {$LR$} degrees.
\newblock {\em Studies in Logic}, 3(2):1--3, 2010.
\newblock In Chinese.

\end{thebibliography}

%
%
\end{document}